\newtheorem{thm}{Theorem}
\newtheorem{lem}[thm]{Lemma}
\newtheorem{conjecture}{Conjecture}
\theoremstyle{definition}
\newtheorem{remark}[thm]{Remark}
\newtheorem*{remark*}{Remark}
\newcommand\<{\begin{equation}} \renewcommand\>{\end{equation}}
\let\tilde\widetilde
\renewcommand\bar[1]{\,\overline{\!#1\!}\,}
\renewcommand\dots{...} \renewcommand\ldots{...}
\newcommand\N{{\mathbb N}}
\newcommand\R{{\mathbb R}}
\newcommand\Z{{\mathbb Z}}
\newcommand\Params{{\mathcal P}}
\newcommand\Square{{\mathcal S}}
\newcommand\drawP[3][black]{ 
	\fill [#2, domain=-#3:-1/#3,samples=25] plot (\x, -1/\x) -- (0,#3) -- (0,1) -- (-1,0) -- (-#3,0) -- cycle;
	\draw [#1, domain=-#3:-1/#3,samples=25] plot (\x, -1/\x);
	\draw [#1] (-#3,0) -- (-1,0) -- (0,1) -- (0,#3);
}
\renewenvironment{cases}{  \left\{\!\!\begin{array}{ll} }{ \end{array}\!\!\right. }
\def\comp{k}
\def\half{{\tfrac12}}
\def\third{{\tfrac13}}
\colorlet{artincolor}{red}
\colorlet{hurwitzcolor}{green!67!black}
\newcounter{commentcounter}\newcommand\COMMENT[2][red]{\stepcounter{commentcounter}\rlap{\smash{$^{\fcolorbox{#1}{#1!15}{\scriptsize\ifthenelse{\equal{#1}{green}}{\color{green!67!black}}{\color{#1}}\!\!{\bf\thecommentcounter}\!\!}}$}}\marginpar{\!\!\parbox{2.8cm}{\raggedright\small \ifthenelse{\equal{#1}{green}}{\color{green!67!black}}{\color{#1}} \textbf{\thecommentcounter.}\,#2}}} \usepackage{silence} \WarningFilter{latex}{Marginpar on page}
\title{On the topological entropy of  (\textit{\lowercase{a}},\textit{\lowercase{b}})-continued fraction transformations}
\author{Adam Abrams}
\address{Faculty of Pure and Applied Mathematics, Wroc\l{}aw University of Science and Technology, Wroc\l{}aw, 50370, Poland}
\email{the.adam.abrams@gmail.com}
\author{Svetlana Katok}
\address{Department of Mathematics, The Pennsylvania State University, University Park, PA 16802, USA}
\email{sxk37@psu.edu}
\author{Ilie Ugarcovici}
\address{Department of Mathematical Sciences, DePaul University, Chicago, IL 60614, USA}
\email{iugarcov@depaul.edu}
\thanks{The second author was partially supported by NSF grant DMS 1602409.}
\keywords{continued fraction maps, one-dimensional dynamics, topological entropy, Markov partitions}
\subjclass{37E05, 37B40}
\date{\today}
\begin{document}

\begin{abstract}
We study the topological entropy of a two-parameter family of maps related to $(a,b)$-continued fraction algorithms and prove that it is constant on a square within the parameter space (two vertices of this square correspond to well-studied continued fraction algorithms). The proof uses conjugation to maps of constant slope.
We also present experimental evidence that the topological entropy is flexible (i.e., takes any value in a range) on the whole parameter space. 
\end{abstract}

\maketitle

\section{Introduction}
The dynamics of piecewise monotone interval maps, and in particular their topological entropy and conjugation to maps of constant slope, has been a rich area of investigation, going back to fundamental work of Parry~\cite{Parry66}. See also the monographs~\cite{ALM,dMvS} and references therein. Within this class of interval maps, a considerable amount of work has been done for unimodal maps. Boundary maps associated to co-compact Fuchsian groups (see~\cite{AKU-Rigidity}) provide an important family of piecewise monotone examples with multiple discontinuity points. In this paper, we study the topological entropy of a two-parameter family of boundary maps $f_{a,b}:\bar\R \to \bar\R$, where $\bar\R = \R \cup \{\infty\}$, associated to the modular group $\mathrm{PSL}(2,\mathbb Z)$. These transformations were introduced in~\cite{KU-JMD} and are given by
\< \label{eqn:fab}
	f_{a,b}(x) := \begin{cases}
		x+1  &\text{ if } x< a \\
		-\dfrac1{x} &\text{ if } a\le x<b \\
		x-1  &\text{ if } x\ge b,
	\end{cases}
\>
where the parameters $a,b$ belong to the set
\[
	\Params := \{(a,b)\in \R^2 \,|\, a\leq 0\leq b,\,b-a\geq 1,\,-ab\leq 1\}. 
\]

The maps $f_{a,b}$ can be used to construct continued fraction expansions: 
for any $x\in\R$, 
\[
x=n_0-\cfrac1{n_1-\cfrac1{n_2-\cfrac1{n_3-\cdots}}}:=\lfloor n_0,n_1,n_2,\dots \rceil_{a,b},
\]
where $|n_k|$ is the
number of iterates under $f_{a,b}$ in between successive visits to $[a,b)$, and the sign of $n_k$ shows whether the iterates are to the left or right of $[a,b)$. This is explained in more detail, with the same notations, in~\cite[Section~2]{KU-JMD}. We refer to $f_{a,b}$ as a ``slow'' continued fraction map, in contrast to a Gauss-like map (the first return of $f_{a,b}$
to the interval $[a,b)$).
Several particular parameter choices correspond to well-studied continued fraction algorithms (see~\cite{KU-CWI,KU-JMD} for references): 
the case $(a,b) = (-1,1)$ corresponds to regular (plus) continued fractions with alternating signs for digits (it is also related to a method of symbolically coding the geodesic flow on the modular surface following Artin's pioneering work), and the case $(a,b) = (-\half,\half)$ gives the ``nearest-integer'' continued fractions considered first by Hurwitz.
These two cases play a pivotal role in~\Cref{sec:A and H}. The case $(a,b) = (-1,0)$ is also noteworthy, corresponding to the classical minus (also called backwards) continued fractions; this case will be mentioned again in \Cref{sec:further}.

The notion of topological entropy was introduced by Adler, Konheim, and McAndrew in~\cite{AKM}. Their definition used covers and applied to compact Hausdorff spaces; Dinaburg~\cite{D70} and Bowen~\cite{B7173} gave definitions involving distance functions and separated sets, which are often more suitable for calculation. While these formulations of topological entropy were originally intended for continuous maps acting on compact spaces, Bowen's definition can actually be applied to piecewise continuous, piecewise monotone maps on an interval, as explained in~\cite{MZ}. The most convenient definition of topological entropy for piecewise continuous piecewise monotone maps is
\[ h_{\rm top}(f) = \lim_{n\rightarrow \infty} \frac{\log(\# \text{ of laps of } f^n)}{n}, \]
where a \emph{lap} is a maximal interval of monotonicity for a function~\cite{ALM, MSz}.
In~\cite{MZ} it is shown that this agrees with Bowen's definition of topological entropy.
When a map is \emph{Markov}, i.e., it admits a finite Markov partition (see~\cite[Chapter~1.9]{KH}), its topological entropy can be found explicitly as the log of the spectral radius (the maximum absolute value of the eigenvalues) of the associated transition matrix.

As a one-dimensional map on $\bar\R$, each $f_{a,b}$ is piecewise continuous and piecewise monotone. The map 
\[ \label{compactify}
	\comp(x) := \frac{x}{1+|x|}
\]
is a homeomorphism from $\bar\R$ to $[-1,1]/{\sim}$ with $\pm1$ identified; for convenience, we will write only $[-1,1]$ and deal with interval maps, although the results and proofs could all be done on a circle.
To make our notation more uniform, we conjugate the standard generators 
\[ \label{eqn:generators}
	T(x) := x+1
	\qquad\text{and}\qquad
	S(x) := -\frac1x
\]
of the modular group $PSL(2,\mathbb Z)$ to
\[ 
	\tilde T := \comp \circ T \circ \comp^{-1}
	\qquad\text{and}\qquad
	\tilde S := \comp \circ S \circ \comp^{-1}
\]
(see \Cref{fig:three functions} on page~\pageref{fig:three functions}) and conjugate our continued fraction map $f_{a,b}:\bar\R\to\bar\R$ to the map $\tilde f_{a,b}:[-1,1] \to [-1,1]$,
\< \label{eqn:fab tilde}
	{\tilde f}_{a,b}(x) 
	:= \comp \circ f_{a,b} \circ \comp^{-1}(x) 
	= \begin{cases}
		\tilde T(x) &\text{if } -1 \le x < \tfrac{a}{1-a} \\
		\tilde S(x) &\text{if } \tfrac{a}{1-a} \le x < \tfrac{b}{1+b} \\
		\tilde T^{-1}(x) &\text{if } \tfrac{b}{1+b} \le x \le 1,
	\end{cases}
\>
thus obtaining a piecewise monotone map with two discontinuity points $\comp(a)$ and $\comp(b)$, see \Cref{fig:f and ftilde} (note that on the right, $\comp(a)=-\tfrac49$ and $\comp(b)=\tfrac27$).

\begin{figure}[htb]
\begin{tikzpicture}[scale=1.9]
	\tikzstyle{mygraph}=[blue,thick];
	\def\Inf{4}
	
	\begin{scope}[scale=1/3]
		\draw [thin,black] (-\Inf-0.2,0) -- (\Inf+0.2,0) (0,-\Inf-0.2) -- (0,\Inf+0.2);
		\foreach \x in {-\Inf, ..., -2, 1, 2, ..., \Inf} \draw (\x,0.1) -- (\x,-0.1) node [below] {\tiny$\x$};
		\draw (-0.8,0.1) -- (-0.8,-0.1) node [below] {\tiny$-\tfrac45$};
		\draw (0.4,0.1) -- (0.4,-0.1) node [below] {\tiny$\tfrac25$};
		
	  	\foreach \y in {-\Inf, ..., -3, -2, -1, 1, 2, 3, ..., \Inf} \draw (0.1,\y) -- (-0.1,\y) node [left] {\tiny$\y$};
		
		\draw [mygraph,<-] (-\Inf,-\Inf+1) -- (-0.8,-0.8+1);
		\draw [mygraph,domain=-0.8:-1/\Inf,samples=25,->] plot (\x, -1/\x);
		\draw [mygraph,domain=1/\Inf:0.4,samples=25,<-] plot (\x, -1/\x);
		\draw [mygraph,->] (0.4,0.4-1) -- (\Inf,\Inf-1);
		
		\draw (0,\Inf+0.2) node [above] {$f_{-4/5,2/5}$ (original)};
	\end{scope}
	
	\begin{scope}[xshift=3.5cm,scale=1.25]
		\draw [thin,black,opacity=0.5] (-1,0) -- (1,0) (0,-1) -- (0,1);
		\draw (-1,-1) rectangle (1,1);
		\foreach \x/\t in {-1/-1, -0.444/{-\tfrac49}, 0.285714286/{\tfrac27}, 1/1} \draw (\x,-0.95) -- (\x,-1) node [below] {\tiny$\t$};
		\foreach \y/\t in {-1/-1, -0.714285714/{-\tfrac57}, 0.555/{\tfrac59}, 1/1} \draw (-0.95,\y) -- (-1,\y) node [left] {\tiny$\t$};
		
		\draw [mygraph,domain=-1:-1/2,samples=25] plot (\x, -2-1/\x);
		\draw [mygraph,domain=-1/2:-4/9,samples=25] plot (\x, {(2*\x+1)/(2+3*\x)});
		\draw [mygraph] (-4/9,5/9) -- (0,1) (0,-1) -- (2/7,-5/7);
		\draw [mygraph,domain=2/7:1/2,samples=25] plot (\x, {(2*\x-1)/(2-3*\x)});
		\draw [mygraph,domain=1/2:1,samples=25] plot (\x, 2-1/\x);
	\end{scope}
		\draw (3.5cm,{(\Inf+0.2)/3}) node [above] {${\tilde f}_{-4/5,2/5}$ (compact)};
\end{tikzpicture}
\caption{Plots of $f_{a,b}$ and ${\tilde f}_{a,b}$ for $a=-\tfrac45, b=\tfrac25$}
\label{fig:f and ftilde}
\end{figure}
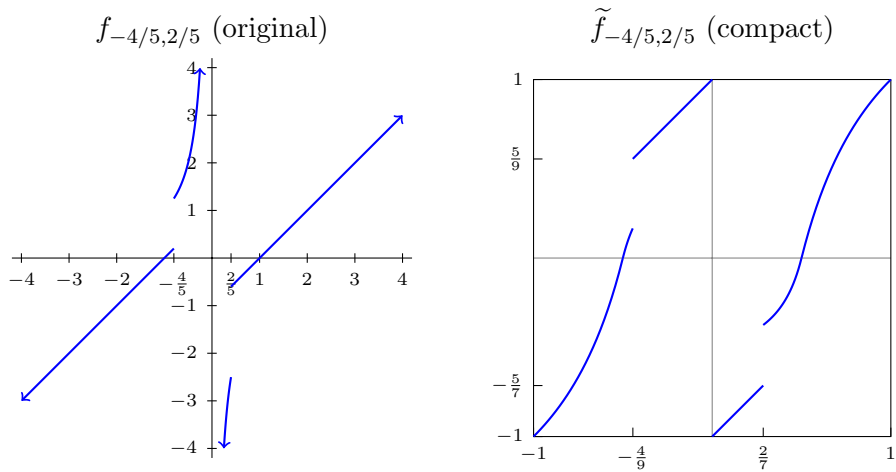

In this paper, we prove the following ``entropy locking" result:
\begin{thm} \label{main}
For any $(a,b)\in \Square =[-1,-\half]\times [\half,1] \subset \Params$, the topological entropy of ${\tilde f}_{a,b}$ (and, therefore, of $f_{a,b}$) is $\log(\frac{1+\sqrt5}2)$.
\end{thm}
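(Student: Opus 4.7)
The strategy follows the Milnor--Thurston--Parry paradigm for piecewise monotone maps: any such map with positive topological entropy $h$ is semi-conjugate to a piecewise linear map of constant slope $e^{h}$, and two maps with identical kneading invariants have the same topological entropy. In line with the abstract, the plan is to exhibit, for every $(a,b)\in\Square$, a conjugation to a common piecewise linear model of slope $\varphi=\tfrac{1+\sqrt5}{2}$, thereby locking the entropy at $\log\varphi$ across the square.

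\textbf{Step 1. Anchor the corners.} I would begin at the distinguished vertices $(a,b)=(-1,1)$ (Artin) and $(a,b)=(-\tfrac12,\tfrac12)$ (Hurwitz). In each case, compute the forward orbits of the discontinuity points $\comp(a),\comp(b)$ under $\tilde f_{a,b}$; these orbits close up after a few iterates and partition $[-1,1]$ into a finite Markov partition. Writing down the corresponding transition matrix and computing its spectral radius should give the golden ratio $\varphi$, with characteristic polynomial essentially $x^2-x-1$. This establishes $h_{\text{top}}(\tilde f_{a,b})=\log\varphi$ at the two anchors and records the kneading sequences of the two discontinuities there.

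\textbf{Step 2. Propagate across $\Square$.} For general $(a,b)\in\Square$, track the orbits of the one-sided limits $\comp(a)^{\pm}$ and $\comp(b)^{\pm}$ under $\tilde f_{a,b}$. The goal is to show that the symbolic itineraries---which branch $\tilde T$, $\tilde S$, or $\tilde T^{-1}$ is applied at each step---coincide with those at the anchor corners for \emph{every} $(a,b)\in\Square$. Concretely, each iterate must lie on the same side of both discontinuities as $(a,b)$ varies over the square. Combining these orbits produces a Markov partition whose combinatorial type, and hence associated transition matrix, is independent of $(a,b)\in\Square$. Milnor--Thurston (or direct computation of the Perron--Frobenius eigenvalue of the transition matrix) then yields the same entropy $\log\varphi$ throughout.

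\textbf{Main obstacle.} The principal technical difficulty is Step 2: verifying the stability of the kneading invariants throughout the closed square. The orbits of $\comp(a),\comp(b)$ depend continuously on $(a,b)$, but the symbolic itinerary is a discrete invariant, and a single orbit point crossing a discontinuity of $\tilde f_{a,b}$ would alter the Markov partition. The precise choice $\Square=[-1,-\tfrac12]\times[\tfrac12,1]$, with Artin and Hurwitz at opposite corners, is exactly what prevents such crossings; carrying out the required case analysis uniformly in $(a,b)\in\Square$, and ensuring that the resulting finite Markov partition really does have transition matrix with spectral radius $\varphi$ for every parameter in the square, is where I expect the bulk of the work to live.
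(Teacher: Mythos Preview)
Your Step~2 does not work as stated. You propose to show that the kneading itineraries of the discontinuities are \emph{constant} across $\Square$ and that every $\tilde f_{a,b}$ with $(a,b)\in\Square$ admits a finite Markov partition of a fixed combinatorial type. Neither is true. Already the two anchor maps $\tilde f_{-1,1}$ and $\tilde f_{-1/2,1/2}$ have \emph{different} Markov transition graphs (the arrows out of states $3$ and $6$ differ), so their kneading data cannot both coincide with that of a generic interior point. More seriously, for uncountably many $(a,b)\in\Square$ the orbits of $\comp(a),\comp(b)$ never close up and $\tilde f_{a,b}$ admits no finite Markov partition at all; the paper explicitly notes this right after stating the theorem. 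So the plan ``same transition matrix for every $(a,b)$'' cannot be carried out.

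The paper's route is genuinely different from kneading stability. It builds the Parry conjugacies $\psi_A$ and $\psi_H$ to constant slope $\varphi$ at the two Markov corners and then proves the unexpected fact that $\psi_A=\psi_H$ as functions on $[-1,1]$ (via a symbolic recoding between the two distinct shift spaces, using the relation $\tilde S\tilde T\tilde S=\tilde T^{-1}\tilde S\tilde T^{-1}$). Call this common map $\psi$. The point is that $\psi$ linearizes $\tilde T$ on $\psi([-1,-\tfrac13])$ (from the Hurwitz side), $\tilde S$ on $\psi([-\tfrac12,\tfrac12])$ (from the Artin side), and $\tilde T^{-1}$ on $\psi([\tfrac13,1])$. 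For any $(a,b)\in\Square$ the breakpoints satisfy $\comp(a)\in[-\tfrac12,-\tfrac13]$ and $\comp(b)\in[\tfrac13,\tfrac12]$, which sit in the \emph{overlaps} of these intervals; hence the single map $\psi$ conjugates $\tilde f_{a,b}$ to constant slope $\varphi$ for every such $(a,b)$, Markov or not. The mechanism is thus ``one conjugacy, overlapping linearization domains,'' not ``constant combinatorics.''
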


The ``golden square" $\Square = [-1,-\half]\times [\half,1] \subset \Params$ is highlighted in Figure~\ref{fig:triangle}. Note that this subset contains (uncountably many) parameters for which $\tilde f_{a,b}$ does \textit{not} admit a Markov partition, and our entropy formula holds for these maps as well. Also note that two maps from this family are not necessarily topologically conjugate to each other.
\begin{figure}[hbt]
	\begin{tikzpicture}[scale=2.75]
		\def\Inf{1.5} \def\sz{\scriptsize}
		\drawP[thick]{black!25}{1.5}
		\begin{scope}
		\draw [orange, thick, fill=yellow!75!orange] (-1, 1/2) rectangle (-1/2, 1);
		\clip (-1, 1/2) rectangle (-1/2, 1);
		\foreach \x in {-1.2,-1.17,...,-0.5} \draw [orange] (\x,0.5) -- +(0.15,0.5);
		\end{scope}
		\draw (0,\Inf) node [above] {\sz$b$} -- (0,0) -- (-\Inf,0) node [left] {\sz$a$}; 
		
		\draw (-1,0) node [below] {\sz$-1$};
		\draw (0,1) node [right] {\sz$1$};
		\draw (0,0) node [right] {\sz$0$} node [below] {\sz$0$};
		
		\fill (-1,1) circle (0.8pt) node [left] {$(-1,1)$};
		\fill (-1/2,1/2) circle (0.8pt) node [right] {$(-\half,\half)$};
	\end{tikzpicture}
	\caption{The parameter space $\Params$, with the ``golden square''~$\Square$ shaded}
	\label{fig:triangle}
\end{figure}

\begin{remark}
For a special family of piecewise affine maps with one discontinuity point, an entropy locking phenomenon was investigated by Bruin, Carminati, Marmi, and Profeti~\cite{Bruinetal} and by Cosper and Misiurewicz~\cite{CM}, both following numerical simulations from Botella-Soler et al.~\cite{Botella-Soler-et-al}. Our maps $\smash{\tilde f_{a,b}}$ are piecewise monotone and have two discontinuity points, so their methods do not readily apply here.
\end{remark}

\begin{remark}
In~\cite[Sections 6-7]{KU-ETDS}, the authors obtained an absolutely continuous invariant probability measure for the first return (Gauss-like) map of $f_{a,b}$ to $[a,b)$. With respect to this measure, the entropy of the Gauss-like map is $\frac1{K_{a,b}}\frac{\pi^2}{3}$~\cite[Theorem 6.2]{KU-ETDS}, where $K_{a,b}$ is the measure of the domain of the natural extension (which is finite). By lifting this measure to $\bar\R$, one obtains an infinite invariant measure for $f_{a,b}:\bar\R \to \bar\R$, so the classical notion of measure-theoretic entropy does not apply to $f_{a,b}$. It is an almost immediate consequence that the~``Krengel entropy'' (see~\cite{Krengel}) of $f_{a,b}$ is $\frac{\pi^2}{3}$ for all $(a,b) \in \Params$.
\end{remark}

\section{Proof of main result} \label{sec:proof of main}

The proof of \Cref{main} uses results about conjugacy to piecewise continuous maps with constant slope (see~\cite{AM}). Specifically, we prove that the maps $\tilde f_{-1,1}$ and $\tilde f_{-1/2,1/2}$ are conjugate to piecewise linear maps with constant slope $\frac{1+\sqrt5}2$ using \textit{the same conjugacy}. We then show that this conjugacy will also conjugate any $\tilde f_{a,b}$ with $(a,b) \in [-1,-\half] \times [\half,1]$ to a map with constant slope $\frac{1+\sqrt5}2$. A similar argument was first used by the authors in~\cite{AKU-Rigidity} for maps related to co-compact Fuchsian groups. An important ingredient of this approach is a symbolic ``recoding'' process, addressed in \Cref{recoding} below. The recoding in this paper turns out to be less intricate than the corresponding recoding in~\cite[Appendix~A]{AKU-Rigidity}.

\subsection{Conjugation to maps of constant slope}

In~\cite{Parry66}, following his seminal work~\cite{Parry64}, Parry proved that a continuous, piecewise monotone, topologically transitive interval\linebreak[4] Markov map with positive topological entropy is conjugate to a constant slope map. In~\cite{AM}, following~\cite{ALM}, Alsed\`a and Misiurewicz generalized this to piecewise continuous, piecewise monotone interval maps that are not necessarily Markov. For the present paper, we need only the original results of Parry.


\begin{thm}[\cite{Parry66}] \label{thmP} Let $I$ be a compact interval and let $g:I\to I$ be a piecewise continuous, piecewise monotone, (strongly) transitive Markov map with positive topological entropy \mbox{$h>0$}. There exists an increasing homeomorphism $\psi:I \to I$ conjugating $g$ to a piecewise continuous map with constant slope $e^{h}$. \end{thm}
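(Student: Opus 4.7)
The plan is to construct $\psi$ directly from a Perron--Frobenius eigenvector of the transition matrix associated with the Markov partition. Let $\{I_1,\dots,I_n\}$ be a finite Markov partition for $g$ and let $A$ be the $n\times n$ matrix with $A_{ij}=1$ whenever $g(I_i)\supseteq I_j$ and $0$ otherwise. As noted earlier in the introduction, the topological entropy of a piecewise monotone Markov map equals $\log\rho(A)$, where $\rho(A)$ is the spectral radius; the hypothesis $h>0$ therefore gives $\lambda:=\rho(A)=e^{h}>1$.

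The next step is to invoke Perron--Frobenius. The strong transitivity of $g$, combined with the Markov property, forces $A$ to be irreducible: for any $i,j$ there is some $N$ such that $g^N(I_i)\supseteq I_j$, i.e.\ $(A^N)_{ij}>0$. Irreducibility then produces a strictly positive right eigenvector $v=(v_1,\dots,v_n)$ with $Av=\lambda v$, unique up to a positive scalar. I rescale so that $\sum_i v_i = |I|$.

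The homeomorphism $\psi$ is now defined combinatorially. Replace each $I_i$ by an interval $J_i\subset I$ of length $v_i$, placed so that $J_1,\dots,J_n$ tile $I$ in the same left-to-right order as the $I_i$'s, and let $\psi|_{I_i}$ be the unique orientation-preserving affine bijection from $I_i$ onto $J_i$. Since the $J_i$'s are glued in matching order, $\psi$ agrees at shared endpoints and is a bona fide increasing homeomorphism of $I$. To verify that $\tilde g:=\psi\circ g\circ\psi^{-1}$ has constant slope $\lambda$, observe that the Markov property makes $g|_{I_i}$ a monotone homeomorphism onto $\bigcup_{j:A_{ij}=1}I_j$, so $\tilde g|_{J_i}$ is a monotone homeomorphism of $J_i$ onto $\bigcup_{j:A_{ij}=1}J_j$. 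As $\psi$ is affine on each piece both in the domain and in the image, $\tilde g|_{J_i}$ is itself affine, with absolute slope
\[
	\frac{\bigl|\bigcup_{j:A_{ij}=1}J_j\bigr|}{|J_i|} = \frac{\sum_j A_{ij}v_j}{v_i} = \frac{(Av)_i}{v_i} = \lambda = e^{h}.
\]

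The main obstacle I expect to grapple with is converting the topological hypothesis of (strong) transitivity into the combinatorial irreducibility of $A$ that Perron--Frobenius requires; this is what delivers strict positivity of $v$, and hence guarantees that $\psi$ is a genuine homeomorphism rather than a degenerate map collapsing some $I_i$ to a point. Once this is in place, the rest of the argument is direct bookkeeping; in particular, the construction only uses monotonicity on each $I_i$, so the weakened "piecewise continuous" assumption (as opposed to global continuity) causes no additional difficulty.
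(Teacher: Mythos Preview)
There is a genuine gap. Your construction makes $\psi$ piecewise affine on the first-level Markov intervals $I_i$, and you then assert that $\tilde g|_{J_i}=\psi\circ g\circ\psi^{-1}|_{J_i}$ is affine ``as $\psi$ is affine on each piece both in the domain and in the image.'' But $g|_{I_i}$ is only assumed monotone and continuous, not affine; conjugating a nonlinear monotone map by affine maps yields a nonlinear map. What your length computation actually shows is that $\tilde g|_{J_i}$ carries an interval of length $v_i$ onto one of length $\lambda v_i$, i.e.\ the \emph{average} slope is $\lambda$; it says nothing about pointwise slope. Concretely, the maps $\tilde f_A,\tilde f_H$ in this paper are built from the M\"obius-type maps $\tilde T,\tilde S,\tilde T^{-1}$ and are visibly nonlinear on each $I_k$ (see Figure~\ref{fig:fA and fH}); your $\psi$ would leave that nonlinearity intact.

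The Parry construction that the paper invokes (and sketches in \eqref{rho defn}--\eqref{psi formula from Parry}) fixes exactly this: one does not stop at level one but defines a measure $\rho$ on \emph{all} cylinders by $\rho(C(\omega_0,\dots,\omega_n))=v_{\omega_n}/\lambda^n$, pushes it forward to $[-1,1]$, and sets $\psi(x)$ to be (an affine rescaling of) the cumulative distribution function. The expanding property $\rho(\sigma C)=\lambda\,\rho(C)$ then forces $\psi\circ g\circ\psi^{-1}$ to stretch every cylinder interval of every rank by the factor $\lambda$, which is what yields constant slope. Your piecewise-affine $\psi$ is the rank-one approximation to this; to repair the argument you must refine to all ranks and pass to the limit, which is precisely Parry's measure-theoretic construction.
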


In the case where $\tilde f_{a,b}$ is Markov, the conjugacy $\psi_{a,b}$ can be obtained by the classical construction due to Parry~\cite{Parry66} and used in the proof of~\cite[Lem\-ma~5.1]{AM}. We define the probability measure $\rho_{a,b}$ on the shift space $X_{a,b} \subset \{1,...,N\}^\N$ as follows: let $\lambda,v$ (possibly depending on $a,b$) be the maximal eigenpair for the Markov transition matrix $M_{a,b}$; for an $(a,b)$-admissible finite sequence $(\omega_0,...,\omega_n)$ (that is, for which $(M_{a,b})_{\omega_i,\omega_{i+1}} = 1$ for $i=0,...,n-1$), we denote a symbolic cylinder of rank $n+1$ as
\[ C_{a,b}(\omega_0,\dots,\omega_n) := \big\{\, \omega' \in X_{a,b} \,|\, \omega'_i = \omega_i ~\forall~ 0 \le i \le n \,\big\} \]
and define the measure $\rho_{a,b}$ of this cylinder to be
\< \label{rho defn}
	\rho_{a,b}\big(C_{a,b}(\omega_0,\dots,\omega_n)\big) = \frac{v_{\omega_n}}{\lambda^n}. 
\>
The measure $\rho_{a,b}$ is equivalent to the shift-invariant ``Parry measure'' (the measure of maximal entropy; see~\cite{Parry64,Parry66}).
The measure $\rho_{a,b}$ is not shift-invariant but has the ``expanding property''
\[
\rho_{a,b}(\sigma_{a,b}(C))=\lambda \cdot \rho_{a,b}(C)
\]
for all cylinders $C$ on $(X_{a,b}, \sigma_{a,b})$.

Using the measure $\rho_{a,b}$, one constructs the push-forward Borel probability measure $\rho'_{a,b}$ on~$[-1,1]$ given by
\[ \label{mu'} \rho'_{a,b}(E) = \rho_{a,b}\big( \phi_{a,b}^{-1}(E) \big) \qquad\text{for Borel $E\subset [-1,1]$}, \]
where $\phi_{a,b}: X_{a,b} \to [-1,1]$ is the (essentially bijective) symbolic coding map, that is, $\phi_{a,b}(\omega) = \bigcap_{i=0}^\infty {\tilde f}_{a,b}^{-i}(I_{\omega_i})$.
The conjugacy $\psi_{a,b}:[-1,1] \to [-1,1]$ is given by
\< \label{psi formula from Parry} \psi_{a,b}(x) := 
-1 + 2\cdot \rho'_{a,b}\big([-1,x]\big).
\>
The presence of $-1$ and $2$ in the formula \eqref{psi formula from Parry} comes from our use of $[-1,1]$ as the domain for $\tilde f_{a,b}$.

\subsection{Artin and Hurwitz parameters} \label{sec:A and H}

We consider two particular parameter choices: the ``Artin'' case $(a,b) = (-1,1)$ and the ``Hurwitz'' case $(a,b) = (-\tfrac12,\tfrac12)$. From now on, we abbreviate $\tilde f_A = \tilde f_{-1,1}$, $\psi_A = \psi_{-1,1}$, etc., and $\tilde f_H = \tilde f_{-1/2,1/2}$, $\psi_H = \psi_{-1/2,1/2}$, etc. Additionally, the color red is used for components of Figures~\ref{fig:fA and fH}, \ref{fig:Markov graphs}, \ref{fig:intervals}, and \ref{fig:overlap} corresponding to~Artin, while green is used for Hurwitz.

\newcommand\drawWithGrid[2][]{
		\begin{scope}[#1]
		\foreach \x in {-1, -0.667, -0.5, -0.333, 0, 0.333, 0.5, 0.667, 1} \draw [thin,black!33] (\x,-1.1) -- (\x,1) (-1.1,\x) -- (1,\x);
	  	
		#2
		
		\draw (-1,-1) rectangle (1,1);
		\foreach \k/\x in {1/-0.833, 2/-0.583, 3/-0.417, 4/-0.167, 5/0.167, 6/0.417, 7/0.583, 8/0.833} \draw (\x,-1) node [below] {\scriptsize$I_\k$} (-1,\x) node [left] {\scriptsize$I_\k$};
	\end{scope}
}
\begin{figure}[tbh]
\vspace*{-5pt}
\begin{tikzpicture}[scale=2.4]
	\tikzstyle{mygraph}=[artincolor,very thick];
	\drawWithGrid{
		\draw [mygraph,domain=-1:-1/2,samples=25] plot (\x, -2-1/\x);
		
		\draw [mygraph] (-1/2,1/2) -- (0,1) (0,-1) -- (1/2,-1/2);
		
		\draw [mygraph,domain=1/2:1,samples=25] plot (\x, 2-1/\x);
		
		\draw (0,1) node [above] {${\tilde f}_A$};
	}
	
	\tikzstyle{mygraph}=[hurwitzcolor,very thick];
	\drawWithGrid[xshift=3cm]{
		\draw [mygraph,domain=-1:-1/2,samples=25] plot (\x, -2-1/\x);
		
		\draw [mygraph,domain=-1/2:-1/3,samples=25] plot (\x, {(2*\x+1)/(2+3*\x)});
		\draw [mygraph] (-1/3,2/3) -- (0,1) (0,-1) -- (1/3,-2/3);
		\draw [mygraph,domain=1/3:1/2,samples=25] plot (\x, {(2*\x-1)/(2-3*\x)});
		
		\draw [mygraph,domain=1/2:1,samples=25] plot (\x, 2-1/\x);
		
		\draw (0,1) node [above] {${\tilde f}_H$};
	}
\end{tikzpicture}
\caption{Plots of ${\tilde f}_A = {\tilde f}_{-1,1}$ and ${\tilde f}_H = {\tilde f}_{-1/2,1/2}$ with their (shared) Markov partition of~$[-1,1]$}
\label{fig:fA and fH}
\end{figure}
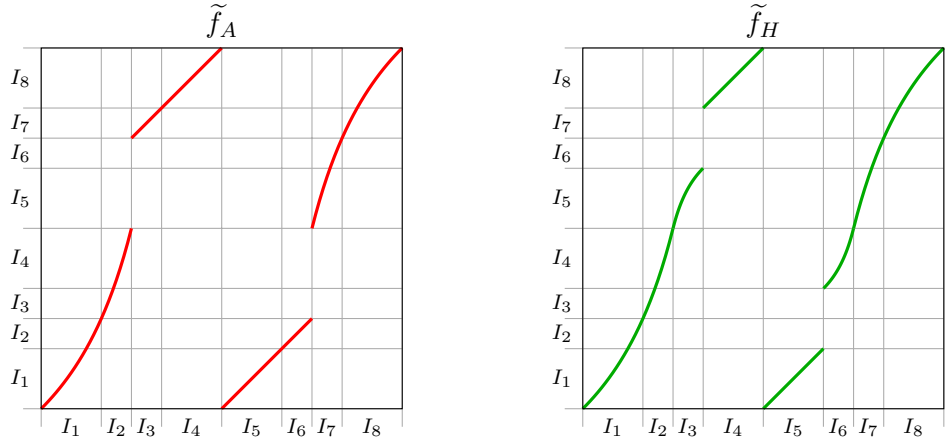

The maps ${\tilde f}_A$ and ${\tilde f}_H$ are each piecewise monotone, piecewise continuous, topologically transitive, and Markov with respect to \textit{the same partition} $\{I_1,...,I_8\}$ of the interval $[-1,1]$ (see \Cref{fig:fA and fH}):
\begin{align*}
I_1 &= [-1,-\tfrac23], &
I_2 &= [-\tfrac23,-\tfrac12], &
I_3 &= [-\tfrac12,-\tfrac13], &
I_4 &= [-\tfrac13,0], \\
I_5 &= [0,\tfrac13], &
I_6 &= [\tfrac13,\tfrac12], &
I_7 &= [\tfrac12,\tfrac23], &
I_8 &= [\tfrac23,1].
\end{align*}
The associated Markov diagrams are shown in \Cref{fig:Markov graphs}.
\begin{figure}[hbt]
	\newcommand\markovgraph[4][black]{
		\begin{tikzpicture}[scale=1.75]
		\foreach \k/\a/\r in {1/150/1, 2/90/0.5, 3/180/0.45, 4/30/1, 5/-150/1, 6/0/0.45, 7/-90/0.5, 8/-30/1} \node [draw, circle] at (\a:\r) (\k) {$\k$};
	
		\foreach \j/\k in {1/2,2/4,4/8,8/7,7/5,5/1, 2/3, 7/6} \draw [->, thick] (\j) -- (\k);
		\draw [->, thick] (1) to [loop left] (1);
		\draw [->, thick] (8) to [loop right] (8);
		
		\draw [->, thick, #1] (3) -- (#3);
		\draw [->, thick, #1] (6) -- (#4);
		\node [above of=2, node distance=1.75em] {#2};
		\end{tikzpicture}
	}
	\qquad
	\markovgraph[artincolor]{Artin}{7}{2}
	\qquad
	\markovgraph[hurwitzcolor]{Hurwitz}{5}{4}
	\caption{Markov structure of Artin (left) and Hurwitz (right) admissibility}
	\label{fig:Markov graphs}
\end{figure}

From the Markov diagrams, we construct the pair of $8\times 8$ transition matrices
\begin{equation}
	M_A=\begin{pmatrix}
	1 & 1 & 0 & 0 & 0 & 0 & 0 & 0\\
	0 & 0 & 1 & 1 & 0 & 0 & 0 & 0\\
	0 & 0 & 0 & 0 & 0 & 0 & \mathbf{1} & 0\\
	0 & 0 & 0 & 0 & 0 & 0 & 0 & 1\\
	1 & 0 & 0 & 0 & 0 & 0 & 0 & 0\\
	0 & \mathbf{1} & 0 & 0 & 0 & 0 & 0 & 0\\
	0 & 0 & 0 & 0 & 1 & 1 & 0 & 0\\
	0 & 0 & 0 & 0 & 0 & 0 & 1 & 1\\
	\end{pmatrix},
	\quad M_H=\begin{pmatrix}
	1 & 1 & 0 & 0 & 0 & 0 & 0 & 0\\
	0 & 0 & 1 & 1 & 0 & 0 & 0 & 0\\
	0 & 0 & 0 & 0 & \mathbf{1} & 0 & 0 & 0\\
	0 & 0 & 0 & 0 & 0 & 0 & 0 & 1\\
	1 & 0 & 0 & 0 & 0 & 0 & 0 & 0\\
	0 & 0 & 0 & \mathbf{1} & 0 & 0 & 0 & 0\\
	0 & 0 & 0 & 0 & 1 & 1 & 0 & 0\\
	0 & 0 & 0 & 0 & 0 & 0 & 1 & 1\\
	\end{pmatrix}.
\end{equation}
Notice that the two matrices are very similar, the only difference being that the transitions $I_3\rightarrow I_7$ and $I_6\rightarrow I_2$ in $M_A$ are replaced by $I_3\rightarrow I_5$ and $I_6\rightarrow I_4$ in $M_H$.

By direct computation, the characteristic polynomials of $M_A$ and $M_H$ are 
\[ (x^2 - x - 1) (x^2 - x + 1) x^4 \quad\text{and}\quad (x^2-x-1) (x^2-x+1) (x^4-1), \] 
respectively, and so both have the same dominant eigenvalue \[ \lambda = \frac{1+\sqrt5}2. \] The corresponding (right probability) eigenvector for both matrices is
\begin{equation}\label{eigenvector}
	v = \frac1{6\lambda+4} (\lambda\!+\!1,\; \lambda,\; 1,\; \lambda,\; \lambda,\; 1,\; \lambda,\; \lambda\!+\!1)
.\end{equation}

To aid in later proofs, we denote  $A_k={\tilde f}_A|_{I_k}$ and $H_k={\tilde f}_H|_{I_k}$. The following lemma can be proven by looking at the graphs in \Cref{fig:fA and fH} or by a careful analysis of \eqref{eqn:fab tilde} on the intervals $I_1,...,I_8$:
\begin{lem} \label{A and H maps}
	The maps $A_1, A_2, H_1, H_2, H_3$ coincide with $\tilde T$. The maps $A_3, A_4, A_5, A_6, H_4, H_5$ coincide with $\tilde S$. The maps $A_7, A_8, H_6, H_7, H_8$ coincide with~$\tilde T^{-1}$.
	In particular, if $k \notin \{3,6\}$, then $A_k = H_k$.
\end{lem}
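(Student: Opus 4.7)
The plan is a direct case check from the definition~\eqref{eqn:fab tilde} of $\tilde f_{a,b}$. First, I would compute the two discontinuity points for the Artin and Hurwitz parameters. Using $\comp(x) = x/(1+|x|)$, one checks
\[
	\tfrac{a}{1-a}\Big|_{a=-1} = -\tfrac12, \quad \tfrac{b}{1+b}\Big|_{b=1} = \tfrac12, \quad
	\tfrac{a}{1-a}\Big|_{a=-1/2} = -\tfrac13, \quad \tfrac{b}{1+b}\Big|_{b=1/2} = \tfrac13.
\]
All four of these values already appear among the partition endpoints $\{-1,-\tfrac23,-\tfrac12,-\tfrac13,0,\tfrac13,\tfrac12,\tfrac23,1\}$, so none of the intervals $I_1,\dots,I_8$ is split by a discontinuity of $\tilde f_A$ or $\tilde f_H$.

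Next, I would read off which branch of \eqref{eqn:fab tilde} governs each $I_k$. For the Artin map the discontinuities are $\pm\tfrac12$, so the $\tilde T$ branch applies on $I_1\cup I_2 = [-1,-\tfrac12)$, the $\tilde S$ branch on $I_3\cup I_4\cup I_5\cup I_6 = [-\tfrac12,\tfrac12)$, and the $\tilde T^{-1}$ branch on $I_7\cup I_8 = [\tfrac12,1]$. For the Hurwitz map the discontinuities are $\pm\tfrac13$, so the $\tilde T$ branch applies on $I_1\cup I_2\cup I_3 = [-1,-\tfrac13)$, the $\tilde S$ branch on $I_4\cup I_5 = [-\tfrac13,\tfrac13)$, and the $\tilde T^{-1}$ branch on $I_6\cup I_7\cup I_8 = [\tfrac13,1]$. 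Comparing these lists with the statement yields each of the three claimed identifications. The final clause follows immediately: for $k\notin\{3,6\}$, $A_k$ and $H_k$ are realized by the same one of $\tilde T$, $\tilde S$, $\tilde T^{-1}$.

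There is no real obstacle here; the statement is a bookkeeping lemma used to make the subsequent recoding and conjugacy arguments transparent. The only thing that needs a little care is confirming that the endpoints of the $I_k$ really do match the discontinuities $\comp(a),\comp(b)$ in both cases, which is what makes the partition $\{I_1,\dots,I_8\}$ simultaneously Markov for $\tilde f_A$ and $\tilde f_H$. The graphs in \Cref{fig:fA and fH} provide a quick visual verification of the branch assignments above.
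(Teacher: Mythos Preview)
Your proposal is correct and is exactly the approach the paper indicates: the paper does not give a formal proof of this lemma but simply remarks that it ``can be proven by looking at the graphs in \Cref{fig:fA and fH} or by a careful analysis of \eqref{eqn:fab tilde} on the intervals $I_1,\dots,I_8$,'' which is precisely the case check you carry out.
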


By \Cref{thmP} there exists a (unique) increasing homeomorphism $\psi_A : [-1,1] \to [-1,1]$ conjugating ${\tilde f}_A$ to a map \[ \ell_A := \psi_A \circ {\tilde f}_A \circ \psi_A^{-1} \] with constant slope $\lambda=\frac{1+\sqrt5}2$, and there exists a (unique) increasing homeomorphism $\psi_H : [-1,1] \to [-1,1]$ conjugating ${\tilde f}_H$ to a map \[ \ell_H := \psi_H \circ {\tilde f}_H \circ \psi_H^{-1} \] also with constant slope $\frac{1+\sqrt5}2$.
We will prove that the maps $\psi_A$ and $\psi_H$ (each obtained by Parry's construction) coincide:

\begin{thm} \label{psi equality} For all $x \in [-1,1]$, $\psi_A(x) = \psi_H(x)$. \end{thm}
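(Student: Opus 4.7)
The plan is to prove the stronger statement that the two Borel probability measures $\rho'_A$ and $\rho'_H$ on $[-1,1]$ coincide; then by formula \eqref{psi formula from Parry}, we will have $\psi_A(x) = -1 + 2\rho'_A([-1,x]) = -1 + 2\rho'_H([-1,x]) = \psi_H(x)$ for every $x$.

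The geometric heart of the argument is that although $\tilde f_A|_{I_3} = \tilde S \neq \tilde T = \tilde f_H|_{I_3}$ (and similarly on $I_6$), the \emph{third} iterates agree on these intervals. Indeed, tracing \Cref{A and H maps} step by step,
\[
\tilde f_A^3|_{I_3} = \tilde S\, \tilde T^{-1}\,\tilde S \quad\text{and}\quad \tilde f_H^3|_{I_3} = \tilde T\,\tilde S\,\tilde T,
\]
which are equal as M\"obius transformations since the classical relation $(ST)^3 = I$ in $\mathrm{PSL}(2,\mathbb Z)$ rearranges to $ST^{-1}S = TST$. An analogous computation on $I_6$ yields $\tilde S \,\tilde T\, \tilde S = \tilde T^{-1}\,\tilde S\,\tilde T^{-1}$, so $\tilde f_A^3|_{I_6} = \tilde f_H^3|_{I_6}$. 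Combined with the agreement of $\tilde f_A$ and $\tilde f_H$ on the other six pieces of the partition, this implies that the orbit of any point $x$ under the two maps differ only by deterministic three-step detours whenever they visit $I_3 \cup I_6$, after which they re-synchronize at the same point.

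Using this, I would construct a rank-preserving bijection between admissible Artin and Hurwitz symbolic sequences so that corresponding cylinders have the same interval image under $\phi_{a,b}$ and the same mass \eqref{rho defn}. The local rule: each Artin block $(3, 7, x, y, \ldots)$ with $x\in\{5,6\}$ and $y \in\{1,2\}$ determined by $x$ is matched with the Hurwitz block $(3, 5, 1, y, \ldots)$; each $(6, 2, u, v, \ldots)$ with $u \in\{3,4\}$ and $v\in\{7,8\}$ determined by $u$ is matched with $(6, 4, 8, v, \ldots)$; outside of these blocks the sequences coincide. The agreement $\tilde f_A^3 = \tilde f_H^3$ on $I_3 \cup I_6$ ensures that matched cylinders have the same interval image, while the identity $\lambda^2 = \lambda + 1$ reconciles the masses of those cylinders whose last symbol lies within a block.

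The main technical obstacle is maintaining this bijection coherently for sequences that re-enter $I_3$ or $I_6$ many times; this is precisely the content of the ``recoding'' lemma deferred to \Cref{recoding} below. Once that bijection is established, the Artin and Hurwitz cylinder-interval families generate the same $\pi$-system on $[-1,1]$ with equal masses, so $\rho'_A = \rho'_H$ as Borel measures, and $\psi_A = \psi_H$ follows.
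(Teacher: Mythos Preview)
Your approach is essentially the paper's: reduce $\psi_A=\psi_H$ to $\rho'_A=\rho'_H$, match Artin and Hurwitz cylinders via the four length-four recodings coming from $(ST)^3=\mathrm{Id}$, and defer the inductive bookkeeping to \Cref{recoding}. One imprecision: a strictly \emph{rank-preserving} bijection with matching interval images fails for words truncated mid-block---the two Artin rank-$3$ cylinders $I_A(3,7,5)$ and $I_A(3,7,6)$ together fill $I_3=I_H(3,5,1)$, but neither equals it individually---so your appeal to $\lambda^2=\lambda+1$ for such cylinders does not repair the interval mismatch. The paper avoids this entirely by restricting to $\omega_n\in\{1,2,7,8\}$ (cylinders with $\omega_n\in\{3,4,5,6\}$ extend uniquely, so nothing is lost); after that restriction the mass equality is simply $v_{\omega_n}=v_{\tau_n}$, checked from the eigenvector via $v_2=v_4$ and $v_5=v_7$, with no need for $\lambda^2=\lambda+1$.
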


Equivalently, $\rho'_A(J) = \rho'_H(J)$ for all intervals $J \subset [-1,1]$. It is sufficient to take $J$ to be a cylinder interval: 
given an $(a,b)$-admissible sequence $\omega=(\omega_0,\omega_1,\dots,\omega_n)$ with $\omega_i \in \{1,\dots,8\}$, we define the corresponding \emph{$(a,b)$-cylinder interval} of rank $n+1$ as
\< \label{eqn:cylinder}
\begin{split}
	I_{a,b}{(\omega_0,\omega_1,\dots,\omega_n)}
	:=\: & I_{\omega_0} \cap {\tilde f}_{a,b}^{-1}(I_{\omega_1}) \cap \cdots \cap {\tilde f}_{a,b}^{-n}(I_{\omega_n}) \\
	=\: & I_{\omega_0} \cap {\tilde f}_{a,b}^{-1}(I_{a,b}(\omega_1,\dots,\omega_n)).
\end{split}
\>

If every $A$-admissible word $\omega = (\omega_0,...,\omega_n)$ had a corresponding $H$-admissible $\tau=(\tau_0,...,\tau_n)$ with $I_A(\omega) = I_H(\tau)$ and $v_{\omega_n} = v_{\tau_n}$ then \Cref{psi equality} would be proven. In fact, we do not need to show $I_A(\omega) = I_H(\tau)$ for \textit{all} $A$-admissible $\omega$, as we now explain.

If $\omega_n\in\{3,4,5,6\}$, then we use the unique Markov transitions $3 \to 7$, $4\to 8$, $5\to 1$ and $6\to 2$ to instead consider
\begin{align*}
    I_A(\omega_0,...,\omega_{n-1},3) = I_A(\omega_0,...,\omega_{n-1},3,7), \\
    I_A(\omega_0,...,\omega_{n-1},4) = I_A(\omega_0,...,\omega_{n-1},4,8), \\
    I_A(\omega_0,...,\omega_{n-1},5) = I_A(\omega_0,...,\omega_{n-1},5,1), \\
    I_A(\omega_0,...,\omega_{n-1},6) = I_A(\omega_0,...,\omega_{n-1},6,2).
\end{align*}
Therefore we can assume $\omega_n\notin\{3,4,5,6\}$.

\begin{lem} \label{recoding}
	If $\omega = (\omega_0,...,\omega_n)$ is $A$-admisisble and $\omega_n\in\{1,2,7,8\}$, then there exists an $H$-admissible word $\tau = (\tau_0,...,\tau_n)$ such that $\omega_0 = \tau_0$ and $I_A(\omega) = I_H(\tau)$. Moreover, if $\omega_n \in \{1, 8\}$, then $\omega_n = \tau_n$; if $\omega_n=2$, then $\tau_n\in\{2,4\}$; and if $\omega_n=7$, then $\tau_n\in\{5,7\}$.
\end{lem}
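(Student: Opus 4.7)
The plan is to induct on the number of \emph{bad transitions} in $\omega$, where a bad transition is a pair $\omega_j \to \omega_{j+1}$ allowed by $M_A$ but not by $M_H$. From the matrices displayed above, these are exactly $3 \to 7$ and $6 \to 2$. In the base case (no bad transitions), $\omega$ is itself $H$-admissible; moreover $\omega_j \notin \{3, 6\}$ for all $j < n$, since in $M_A$ the symbols $3$ and $6$ must each be followed by a bad symbol, so $\tilde f_A = \tilde f_H$ on $I_{\omega_j}$ for $j < n$ by \Cref{A and H maps}, yielding $I_A(\omega) = I_H(\omega)$. Taking $\tau = \omega$ then settles this case and satisfies all four endpoint constraints.

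For the inductive step I locate the first bad transition, at positions $i, i+1$, and split on its type. In \emph{Type~1} ($\omega_i = 3, \omega_{i+1} = 7$), the Markov structure forces $\omega_{i-1} = 2$ and $\omega_{i+2} \in \{5, 6\}$. I would replace the sub-word at positions $i, i+1, i+2$ by $(3, 5, 1)$. The group identity $\tilde S \tilde T^{-1} \tilde S = \tilde T \tilde S \tilde T$ in $\mathrm{PSL}(2,\mathbb{Z})$ (a consequence of $(ST)^3 = 1$) matches the Artin composition $A_{\omega_{i+2}} \circ A_{\omega_{i+1}} \circ A_{\omega_i}$ to the Hurwitz composition $H_1 \circ H_5 \circ H_3$ for the new block; since $\omega_{i+3} \in \{1, 2\}$ satisfies $A_{\omega_{i+3}} = H_{\omega_{i+3}} = \tilde T$, the compositions agree through position $i+3$, which is exactly what is needed for the cylinder interval to be preserved. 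This substitution strictly reduces the bad-transition count (it removes the $3 \to 7$, and also the subsequent $6 \to 2$ at $i+2,i+3$ if $\omega_{i+2} = 6$). The suffix $(\omega_{i+3}, \dots, \omega_n)$ is still $A$-admissible with last symbol $\omega_n \in \{1,2,7,8\}$, so the inductive hypothesis produces an $H$-admissible $\tilde\tau$ with $\tilde\tau_0 = \omega_{i+3}$; stitching yields $\tau = (\omega_0, \dots, \omega_{i-1}, 3, 5, 1, \tilde\tau_0, \dots, \tilde\tau_{n-i-3})$, which is $H$-admissible at every transition (the junction $1 \to \tilde\tau_0 \in \{1,2\}$ is in $M_H$).

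\emph{Type~2} ($\omega_i = 6, \omega_{i+1} = 2$) forces $\omega_{i-1} = 7$ and necessarily $\omega_{i-2} = 8$ (the alternative $\omega_{i-2} = 3$ would produce an earlier bad $3 \to 7$), with $\omega_{i+2} \in \{3, 4\}$. I would replace the sub-word at positions $i, i+1, i+2$ by $(6, 4, 8)$ and invoke the companion identity $\tilde S \tilde T \tilde S = \tilde T^{-1} \tilde S \tilde T^{-1}$; tail induction proceeds as before. For the boundary cases where $i+1 = n$ (so $\omega_n \in \{2, 7\}$), only the final symbol changes: Type~1 at the end gives $\tau = (\dots, 3, 5)$ via the interval identity $\tilde S(I_7) = \tilde T^{-1}(I_5) = I_3$, and Type~2 at the end gives $\tau = (\dots, 6, 4)$ via $\tilde S(I_2) = \tilde T(I_4) = I_6$. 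These produce exactly the endings $\tau_n = 5$ and $\tau_n = 4$ permitted by the lemma; in all other cases $\tau_n$ is inherited from the recoded tail, so the constraints $\tau_n \in \{1\}, \{8\}, \{2,4\}, \{5,7\}$ for $\omega_n \in \{1, 8, 2, 7\}$ all hold.

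The main obstacle I anticipate is making the inductive step airtight for compound bad structures such as $(3, 7, 6, 2, 3, 7, \dots)$, where bad blocks chain or overlap. After each substitution one must verify that the tail remains $A$-admissible with its original last symbol (so that the inductive hypothesis applies), that the junction with the inductively recoded tail does not re-introduce a bad pair, and that the nested Möbius identities compose consistently so that the endpoint shift $\omega_n \mapsto \tau_n$ arises only in the boundary sub-case and does not leak backward into the interior of $\tau$.
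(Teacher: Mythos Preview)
Your proposal is correct and follows essentially the same approach as the paper. Your ``bad transitions'' $3\to7$ and $6\to2$ are precisely the initial pairs of the paper's four ``exceptional blocks'' $3751,3762,6237,6248$; your length-three substitutions $(3,7,\omega_{i+2})\mapsto(3,5,1)$ and $(6,2,\omega_{i+2})\mapsto(6,4,8)$ via the relation $(\tilde T\tilde S)^3=\mathrm{Id}$ are exactly the paper's recoding identities, and your induction on the tail $(\omega_{i+3},\dots,\omega_n)$ matches the paper's step verbatim. The only cosmetic difference is that the paper absorbs your ``boundary case'' $i+1=n$ into its base case via the rank-two matching $I_A(3,7)=I_H(3,5)$ and $I_A(6,2)=I_H(6,4)$; your final-paragraph worry about chained blocks like $3,7,6,2,3,7,\dots$ is unnecessary, since the tail recursion handles them automatically.
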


\begin{proof}

Recall the notation $A_k$ and $H_k$ from \Cref{A and H maps}. From the monotonicity of $A_k$ and $H_k$, we can avoid the intersection in the definition \eqref{eqn:cylinder} and instead calculate
\begin{align*}
	I_A(\omega_0,\omega_1,\dots,\omega_n)
	= A_{\omega_0}^{-1}(I_A(\omega_1,\dots,\omega_n)) 
	\quad \text{if  $(\omega_0,\omega_1,\dots,\omega_n)$ is $A$-admissible} 
\end{align*}
and 
\begin{align*}
	I_H(\tau_0,\tau_1,\dots,\tau_n)
	= H_{\tau_0}^{-1}(I_H(\tau_1,\dots,\tau_n))
	\quad \text{if $(\tau_0,\tau_1,\dots,\tau_n)$ is $H$-admissible.}
\end{align*}
With these observations, in \Cref{fig:intervals} we have the following correspondence between the rank two cylinder intervals of the two maps.
\begin{figure}[thb]
\begin{tikzpicture}[xscale=7]
	\draw (-1,0) -- (1,0);
	\foreach \x in {-1,0,1} \draw (\x,0.1) -- (\x,-0.1) node [below] {$\x$};
	\foreach \a/\b in {1/3,1/2,3/5,2/3,3/4} \draw (\a/\b,0.1) -- (\a/\b,-0.1) node [below] {$\tfrac{\a}{\b}$} (-\a/\b,0.1) -- (-\a/\b,-0.1) node [below] {$-\tfrac{\a}{\b}$\;};
	
	\foreach \a/\b/\x in {1/1/-0.875, 1/2/-0.708, 2/3/-0.633, 2/4/-0.550, 3/5/-0.417, 4/8/-0.167, 5/1/0.167, 6/4/0.417, 7/5/0.550, 7/6/0.633, 8/7/0.708, 8/8/0.875} \draw (\x,0.3) node {\bf\color{hurwitzcolor}\a\b}; 
	
	\foreach \a/\b/\x in {1/1/-0.875, 1/2/-0.708, 2/3/-0.633, 2/4/-0.550, 3/7/-0.417, 4/8/-0.167, 5/1/0.167, 6/2/0.417, 7/5/0.550, 7/6/0.633, 8/7/0.708, 8/8/0.875} \draw (\x,0.8) node {\bf\color{artincolor}\a\b}; 
\end{tikzpicture}
\caption{Rank two cylinder intervals for ${\tilde f}_A$ (red) and ${\tilde f}_H$ (green) coincide}
\label{fig:intervals}
\end{figure}
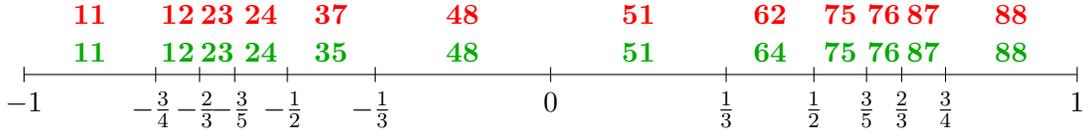

The matching $I_A(1,1) = I_H(1,1)$ and $I_A(1,2) = I_H(1,2)$ and similarly for 
$(2,3)$, $(2,4)$, $(4,8)$, $(5,1)$, $(7,5)$, $(7,6)$, $(8,7)$, and $(8,8)$ 
are all trivial due to \Cref{A and H maps}.
One can also check directly that
\[
I_A(3,7)=I_H(3,5)=I_3 \quad\text{and}\quad I_A(6,2)=I_H(6,4)=I_6.
\]

We have the following identities:
\begin{align*}
	I_A(\mathbf3,7,5,\mathbf1) &= I_H(\mathbf3,5,1,\mathbf1), \\
	I_A(\mathbf3,7,6,\mathbf2) &= I_H(\mathbf3,5,1,\mathbf2), \\
	I_A(\mathbf6,2,3,\mathbf7) &= I_H(\mathbf6,4,8,\mathbf7), \\
	I_A(\mathbf6,2,4,\mathbf8) &= I_H(\mathbf6,4,8,\mathbf8).
\end{align*}
These all follow from the classical relationship $(\tilde T\tilde S)^3=\mathrm{Id}$ on the generators of $\mathrm{SL}(2,\Z)$. An equivalent formulation is $\tilde S\tilde T \tilde S = \tilde T^{-1}\tilde S\tilde T^{-1}$. 
By definition,
\[
I_A(3,7,5,1)=A_{3}^{-1}A_7^{-1}A_5^{-1}(I_1)=\tilde S\tilde T \tilde S(I_1)
\]
and
\[
I_H(3,5,1,1)
= H_{3}^{-1}H_5^{-1}H_{1}^{-1}(I_1)
= \tilde T^{-1}\tilde S\tilde T^{-1}(I_1);
\]
because $\tilde S\tilde T \tilde S = \tilde T^{-1}\tilde S\tilde T^{-1}$, we have $I_A(3,7,5,1) = I_H(3,5,1,1)$. The other three relations are proved similarly. Alternatively, this can be proved by tracking the image of $I_1$ under the relevant maps shown in \Cref{fig:three functions}.
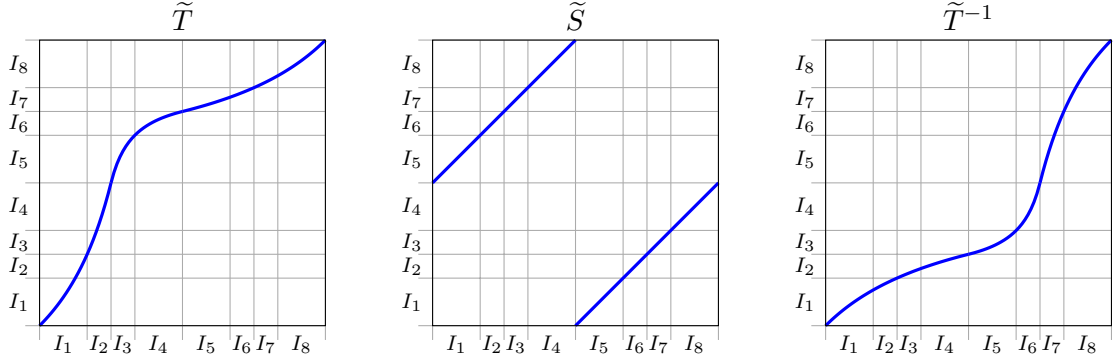
\begin{figure}[htb]
	\begin{tikzpicture}[scale=1.9]
	\drawWithGrid{
		\tikzstyle{mygraph} = [very thick, blue];
		\draw [mygraph,domain=-1:-1/2,samples=25] plot (\x, {-2-1/\x});
		\draw [mygraph,domain=-1/2:0,samples=25] plot (\x,  {(1+2*\x)/(2+3*\x)});
		\draw [mygraph,domain=0:1,samples=50] plot (\x,  {1/(2-\x)});
		\draw (0,1) node [above] {$\tilde T$};
	}
	\drawWithGrid[xshift=2.75cm]{
		\draw [very thick, blue] (-1,0) -- (0,1) (0,-1) -- (1,0);
		\draw (0,1) node [above] {$\tilde S$};
	}
	\drawWithGrid[xshift=5.5cm]{
		\tikzstyle{mygraph} = [very thick, blue];
		\draw [mygraph,domain=-1:0,samples=50] plot (\x, {-1/(2+\x)});
		\draw [mygraph,domain=0:1/2,samples=25] plot (\x,  {(1-2*\x)/(-2+3*\x)});
		\draw [mygraph,domain=1/2:1,samples=25] plot (\x,  {2-1/\x});
		\draw (0,1) node [above] {$\tilde T^{-1}$};
	}
	\end{tikzpicture}
	\caption{Graphs of $\tilde T$ (left), $\tilde S$ (middle), and $\tilde T^{-1}$ (right), each with the Markov shared partition for Artin and Hurwitz maps}
	\label{fig:three functions}
\end{figure}

Notice that the recoding relations do not affect the first and fourth digit. This means that the recoding process will be localized to these words of length $4$, and it does not affect the symbols on either side of the block.

We refer to the four words
\[ 3751, \quad 3762, \quad 6237, \quad 6248 \]
as ``exceptional blocks''.
Our goal is to prove \Cref{recoding} by induction on the number $\ell$ of exceptional blocks that occur in $\omega$.

\textit{Base case.} If there are no exceptional blocks in $(\omega_0,\omega_1,\dots,\omega_n)$, then the sequence $(\omega_0,\omega_1,\dots,\omega_{n-2})$ will not contain the symbols $3$ or $6$ and so by the final statement of \Cref{A and H maps} we have 
\[
\begin{split}
I_A(\omega_0,\omega_1,\dots,\omega_n)&=A^{-1}_{\omega_0}\cdots A^{-1}_{\omega_{n-2}}(I_A(\omega_{n-1},\omega_n))\\
&=H^{-1}_{\omega_0}\cdots H_{\omega_{n-2}}^{-1}(I_H(\omega_{n-1},\tau_n))\\
&=I_H(\omega_0,\omega_1,\dots,\omega_{n-1},\tau_n),
\end{split}
\]
where $\tau_n$ is determined from the matching $I_A(\omega_{n-1},\omega_n)=I_H(\omega_{n-1},\tau_n)$ (see Figure~\ref{fig:intervals}).

\textit{Induction step.}
Now assume that the statement of \Cref{recoding} is true for any $A$-admissible finite sequence $\omega$ that contains $\ell>0$ exceptional blocks; we will prove that the statement holds for any word $(\omega_0,\omega_1,\dots,\omega_n)$ that contains $\ell+1$ exceptional blocks.

Let $k$ be the index where the first exceptional block appears. From the induction assumption,
\[
I_A(\omega_{k+3},\omega_{k+4},\dots,\omega_n)=I_H(\tau_{k+3},\tau_{k+4},\dots,\tau_n),
\]
where $\tau_{k+3}=\omega_{k+3}$; 
if $\omega_n \in \{1, 8\}$ then $\omega_n = \tau_n$. If $\omega_n=2$ then $\tau_n\in\{2,4\}$; and if $\omega_n=7$ then $\tau_n\in\{5,7\}$.
\pagebreak

Now, finally, we have
\begin{align*}
I_A(\omega)
&= I_A(\omega_0,\omega_1,\dots,\omega_{k-1},\textcolor{artincolor}{\omega_k,\omega_{k+1},\omega_{k+2},\omega_{k+3}},\dots,\omega_n) \\*
&=A^{-1}_{\omega_0}\cdots A^{-1}_{\omega_{k-1}}\textcolor{artincolor}{A^{-1}_{\omega_k}A^{-1}_{\omega_{k+1}}A^{-1}_{\omega_{k+2}}}(I_A(\omega_{k+3},\dots,\omega_n))\\
&= H^{-1}_{\omega_0}\cdots H_{\omega_{k-1}}^{-1}\textcolor{hurwitzcolor}{H^{-1}_{\omega_k}H^{-1}_{\tau_{k+1}}H^{-1}_{\tau_{k+2}}}(I_H(\omega_{k+3},\dots,\tau_n))\\
&= I_H(\omega_0,\omega_1,\dots,\omega_{k-1},\textcolor{hurwitzcolor}{\omega_k,\tau_{k+1},\tau_{k+2},\omega_{k+3}},\dots,\tau_n) 
\\* &=
I_H(\tau). \qedhere
\end{align*}
\end{proof}

With the recoding process (\Cref{recoding}) in place, we return to showing that $\psi_A =\psi_H$.

\begin{proof}[Proof of \Cref{psi equality}]
By the assumptions of \Cref{recoding}, it is enough to show that $\rho'_A(I_A(\omega)) = \rho'_H(I_A(\omega))$ for $\omega_n \in \{1,2,7,8\}$. Fix $\omega = (\omega_0,...,\omega_n)$ and let $\tau = (\tau_0,...,\tau_n)$ be the corresponding $H$-admissible word from \Cref{recoding}.

Since $\phi_A$ maps the cylinder interval $I_A(\omega)$ exactly to the symbolic cylinder $C_A(\omega)$, 
\[
	\rho'_A(I_A(\omega_0,\ldots,\omega_n)) = \frac{ v_{\omega_n}}{\lambda^n}.
\]
Because $I_A(\omega_0,\ldots,\omega_n) =I_H(\tau_0,\ldots,\tau_n)$, and $\phi_H$ maps the cylinder interval $I_H(\tau)$ exactly to the symbolic cylinder $C_H(\tau)$, in fact
\[
	\rho'_H(I_A(\omega_0,\ldots,\omega_n)) =\rho'_H(I_H(\tau_0,\ldots,\tau_n)) =\frac{v_{\tau_n}}{\lambda^n}. 
\]
The claim $\rho'_A(I_A(\omega)) = \rho'_H(I_A(\omega))$ is now equivalent to $v_{\omega_n} = v_{\tau_n}$, and this is easily checked using \eqref{eigenvector} and the final parts of \Cref{recoding}.
If $\omega_n \in \{1,8\}$ then $\omega_n = \tau_n$ (so trivially $v_{\omega_n} = v_{\tau_n}$). If $\omega_n = 2$ then $\tau_n \in \{2,4\}$, and since $v_2 = v_4$ this is also fine. Similarly, if $\omega_n = 7$ then $\tau_n \in \{5,7\}$ is sufficient because $v_5 = v_7$.

Having proven that $\rho'_A(I_A(\omega)) = \rho'_H(I_A(\omega))$ for a generating set of intervals $I_A(\omega)$, and using the definition \eqref{psi formula from Parry}, we conclude that $\psi_A = \psi_H$.
\end{proof}

\subsection{Proof of Theorem~\ref{main}}

Since $\psi_A = \psi_H$ by \Cref{psi equality}, we will now denote these two maps by simply $\psi$.
The map ${\tilde f}_H$ acts as $\tilde T$ on the interval $\comp([-\infty,-\half]) = [-1,-\third]$, so by construction, $\psi \circ \tilde T \circ \psi^{-1}$ is linear (with slope $\lambda = \frac{1+\sqrt5}2$, which we now refrain from repeating) on the interval $\psi([-1,-\third])$.
Similarly, ${\tilde f}_A$ acts as $\tilde S$ on $\comp([-1,1]) = [-\half,\half]$ and so $\psi \circ \tilde S \circ \psi^{-1}$ is linear on $\psi([-\half,\half])$.
And because ${\tilde f}_H$ acts as $\tilde T^{-1}$ on $[\third,1]$, we know that $\psi \circ \tilde T^{-1} \circ \psi^{-1}$ is linear on $\psi([\third,1])$. See~\Cref{fig:overlap}.
\begin{figure}[htb]
    \begin{tikzpicture}[scale=2.8]
        \tikzstyle{A}=[red!75!black, line width=1.33, dashed]
        \tikzstyle{H}=[green!90!black, line width=2.33]
        
		\foreach \s in {1,-1} {
	    \draw [scale=\s, H] (-1,-1) -- (-0.236068, 0.236068);
	    \draw [scale=\s, H] (-0.236068, 1 - 0.381966) -- (0,1);
	    
	    \draw [scale=\s, A] (-1,-1) -- (-0.381966, 0);
	    \draw [scale=\s, A] (-0.381966, 0.381966) -- (0,1);
	    }
	    \foreach \x/\t in {-1/-1,1/1} {
	    	\draw (\x,-1) node [below] {\tiny$\t$} -- +(0,0.05);
			\draw (-1,\x) node [left] {\tiny$\t$} -- +(0.05,0);
		}
		\foreach \x/\t in { -0.381966/{$\psi(\comp(-1))$},   -0.236068/{$\psi(\comp(-1/2))$},  0.236068/{$\psi(\comp(1/2))$}, 0.381966/{$\psi(\comp(1))$} } {
		    \draw (\x,-1-0.033) -- +(0,0.066);
		    \draw (\x+0.05,-1.02) node [left,rotate=45] {\tiny\t};
		}
		
		\draw (-1,-1) rectangle (1,1);
	\end{tikzpicture}
    \caption{Partially-overlapping graphs of $\psi \circ {\tilde f}_A \circ \psi^{-1}$ (red, dashed) and $\psi \circ {\tilde f}_H \circ \psi^{-1}$ (green)}
    \label{fig:overlap}
\end{figure}
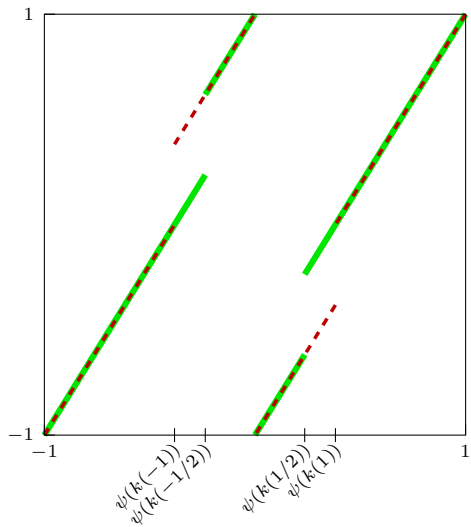

The map $\psi$ therefore satisfies the following four conditions:
	\begin{enumerate}[\quad(i)] 
	\addtolength\itemsep{2pt}
		\item \label{required1} for $x \in \psi([-1,-\third])$, $\psi(\tilde T(\psi^{-1}(x)) = \lambda x + c_1$;
		\item \label{required2} for $x \in \psi([-\half,0])$, $\psi(\tilde S(\psi^{-1}(x)) = \lambda x + c_2$;
		\item \label{required3} for $x \in \psi([0,\half])$, $\psi(\tilde S(\psi^{-1}(x)) = \lambda x + c_3$; \marginpar{\color{white}.} 
		\item \label{required4} for $x \in \psi([\third,1])$, $\psi(\tilde T^{-1}(\psi^{-1}(x)) = \lambda x + c_4$.
	\end{enumerate}

	\noindent In fact, one can calculate $c_1 = \lambda - 1, c_2 = 1, c_3 = -1, c_4 = 1-\lambda$, but these are not necessary for the proof.
	
	Let $(a,b) \in [-1,-\half]\times [\half,1]$ be arbitrary. The map ${\tilde f}_{a,b}$ acts as $\tilde T$ on the interval
	\[ [-1,\comp(a)] \subset [-1,-\third], \]
	and since $\psi \circ \tilde T \circ \psi^{-1}$ is linear on all of $\psi([-1,-\third])$ by (\ref{required1}), we have that $\psi \circ {\tilde f}_{a,b} \circ \psi^{-1}$ is linear on $\psi([-1,\comp(a)]) \subset \psi([-1,-\third])$.
	
	Similarly, $\psi \circ {\tilde f}_{a,b} \circ \psi^{-1}$ is linear on $\psi([\comp(a),0])$ because ${\tilde f}_{a,b}$ acts by $\tilde S$ on $[\comp(a),0] \subset [-\half,0]$ and, by (\ref{required2}), $\psi \circ \tilde S \circ \psi^{-1}$ is linear on all of all $\psi([-\half,0])$.
	Likewise, $\psi \circ {\tilde f}_{a,b} \circ \psi^{-1}$ is linear on $\psi([0,\comp(b)]) \subset \psi([0,\half])$ by (\ref{required3}) and on $\psi([\comp(b),1]) \subset \psi([-\third,1])$ by (\ref{required4}).
	
	Since ${\tilde f}_{a,b}$ is conjugate to a map with constant slope $\lambda$ on all of $[-1,1]$, we have $h_{\rm top}({\tilde f}_{a,b}) = \log(\lambda) = \log(\frac{1+\sqrt5}2)$ by \cite{MSz}.
\hfill$\square$

\section{Further remarks and open questions} \label{sec:further}

\subsection{Slow Gauss map} \label{rem:slow Gauss} Let $g:[0,\infty] \to [0,\infty]$ be the classical ``slow Gauss map'' 
\[ g(x) = \begin{cases} 1/x &\text{if } 0 \le x < 1 \\ x-1 &\text{if } x \ge 1, \end{cases} \]
which is closely related with a regular (plus) continued fraction expansion: for $0<x<1$,
\[
x=\cfrac1{n_1+\cfrac1{n_2+\cfrac1{n_3+\cdots}}}:=[0,n_1,n_2,n_3,\dots ]
\]
where the digits $n_k$ are the number of consecutive iterates under $g$ that are in $[1, \infty]$ between visits to $[0,1)$. Let $\tilde g:[0,1] \to [0,1]$ be the compactified version, $\tilde g = \comp \circ g \circ \comp^{-1}$, where $k:[0,\infty] \to [0,1]$ is $k(x) = \frac{x}{1+x}$. In~\cite{B79}, Bowen considered measure-theoretic properties of these two maps.

The map $\tilde g:[0,1] \to [0,1]$ is semi-conjugate to $\tilde f_{-1,1}:[-1,1]\to[-1,1]$ via the absolute value function ${\rm abs}:[-1,1]\to[0,1]$, that is $g \circ {\rm abs} = {\rm abs} \circ {\tilde f}_{-1,1}$, so $g$ is a topological factor of $\tilde f_{-1,1}$. Although in general the topological entropy of a factor is only less than or equal to the topological entropy of the map, in this case we have equality of topological entropies: the map $\tilde g$ has the Markov partition $\{I_1,I_2\} = \{[0,\half], [\half,1]\}$ with transition matrix $\Big(\begin{smallmatrix}\text{\normalsize0} & \text{\normalsize1} \\[2pt] \text{\normalsize1} & \text{\normalsize1} \end{smallmatrix}\Big)$, which immediately gives that \( h_{\rm top}(\tilde g) = \log(\frac{1+\sqrt5}2). \) The reason for the equality is
a simple relationship between the regular continued fraction expansion and a $(-1,1)$-continued fraction expansions: for $0<x\le 1$,
\[
x=[0,n_1,n_2,\dots ]=\lfloor 0,-n_1,n_2,-n_3,\dots \rceil_{-1,1}.
\]

\subsection{Conjectures about entropy}

Outside of the square $\Square = [-1,-\half] \times [\half,1]$, there are many unanswered questions about the behaviour of $h_{\rm top}(f_{a,b})$. Using Markov partitions, we can calculate explicit entropy values for many rational values $(a,b)$, and from these we have created \Cref{fig:3d}.

\begin{figure}[hbt]
	\begin{tikzpicture}[scale=0.9]
		\draw (0,-0.5) node {\includegraphics[width=12cm]{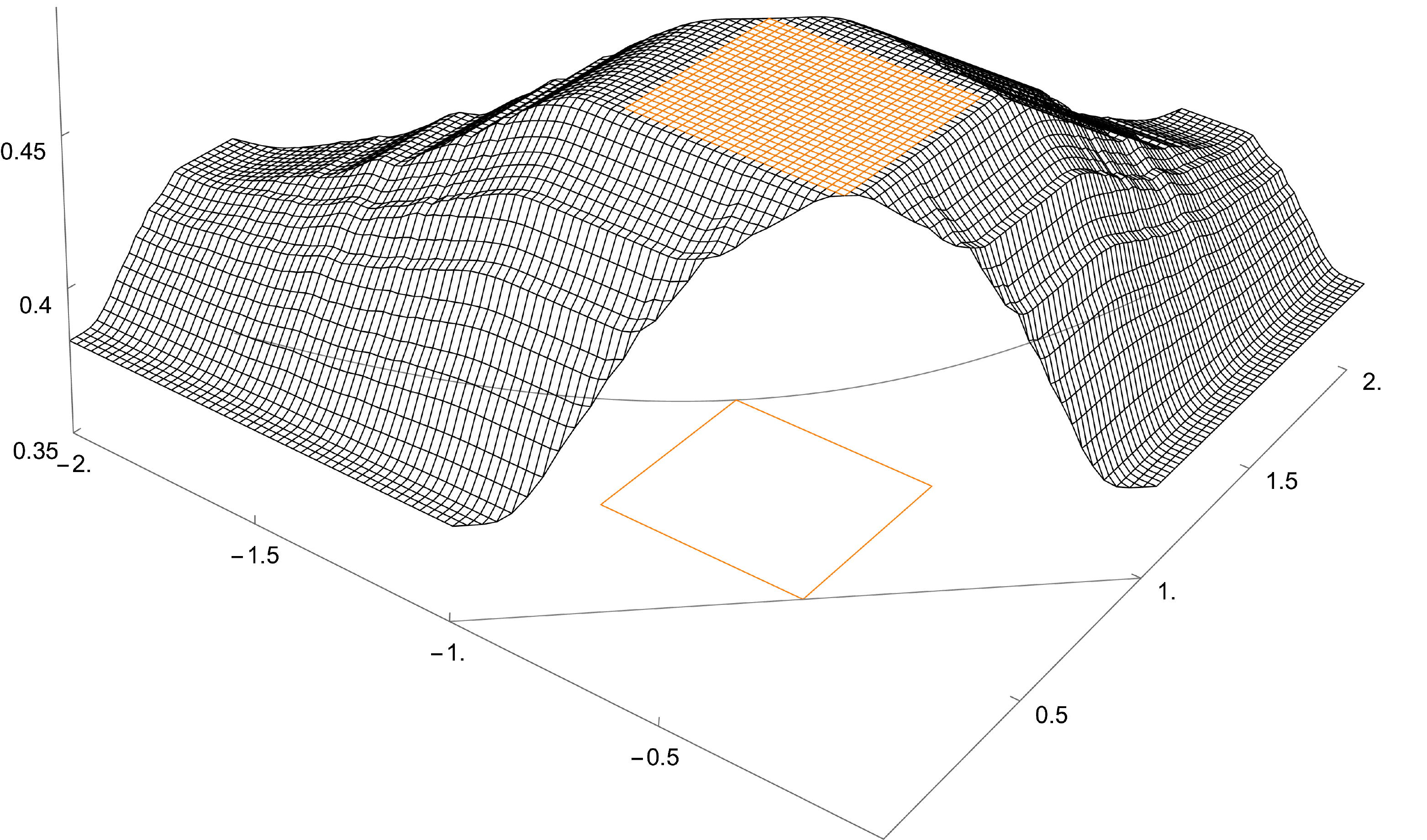}};
		\draw (-3.5,-2.9) node {$a$};
		\draw (4.8,-2.5) node {$b$};
		\draw (-7.25,1.25) node {$h_{\rm top}$};
	\end{tikzpicture}
	\caption{Plot of topological entropy, numerically (gold is proven)}
	\label{fig:3d}
\end{figure}

While in \Cref{sec:proof of main} we focused on the parameter choices $(-1,1)$ and $(-\half,\half)$, the map for the case $(a,b) = (-1,0)$ has also been studied independently, as it corresponds to classical backwards continued fractions~\cite{AF84,K96}. Additionally, according to our numerical tests, this parameter appears to give the minimum possible value for $h_{\rm top}(f_{a,b})$.

We can directly calculate the value
\[
	h_{\rm top}(f_{-1,0})
	= \log(\kappa)
	\approx 0.382,
\]
where $\kappa$ is the spectral radius of \[ M_{-1,0} = \begin{pmatrix} 1 & 1 & 0 & 0 \\ 0 & 0 & 0 & 1 \\ 0 & 1 & 0 & 0 \\ 0 & 0 & 1 & 1 \end{pmatrix} \] and satisfies $\kappa^3 - \kappa^2 - 1 = 0$. \medskip 

\begin{conjecture}[Flexibility] ~
	\begin{enumerate}[\quad(i)]
		\item If $(a,b) \in \Params$ then $\log(\kappa) \le h_{\rm top}(f_{a,b}) \le \log(\frac{1+\sqrt5}2)$.
		\item For any $h \in [\log(\kappa), \log(\frac{1+\sqrt5}2)]$, there exists $(a,b) \in \Params$ with $h_{\rm top}(f_{a,b}) = h$.
	\end{enumerate}
\end{conjecture}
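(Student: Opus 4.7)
The plan is to reduce (ii) to (i) via continuity and the intermediate value theorem, and to attack (i) by extending the conjugacy-to-constant-slope strategy of \Cref{sec:proof of main}. As a preliminary step I would verify that $(a,b)\mapsto h_{\rm top}(f_{a,b})$ is continuous on $\Params$: because the three branches of $f_{a,b}$ depend smoothly on the parameters and the two discontinuities are themselves $a$ and $b$, the lap-count formula $h_{\rm top}=\lim_n n^{-1}\log(\#\text{laps of }f^n)$, equivalently the Milnor--Thurston kneading determinant whose smallest positive zero depends continuously on $(a,b)$, yields continuity everywhere on $\Params$ (modulo a careful treatment along kneading-bifurcation loci).

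For the upper bound in (i), I would try to push the conjugacy $\psi=\psi_A=\psi_H$ of \Cref{sec:proof of main} beyond $\Square$. The map $\psi$ is defined on all of $[-1,1]$ and conjugates $\tilde T$, $\tilde S$, $\tilde T^{-1}$ to affine maps of slope $\lambda=(1+\sqrt 5)/2$ on $\psi([-1,-\tfrac13])$, $\psi([-\tfrac12,\tfrac12])$, $\psi([\tfrac13,1])$, respectively. For general $(a,b)\in\Params\setminus\Square$, the relevant branches of $\tilde f_{a,b}$ extend past the regions on which $\psi$ is calibrated, so I would replace $\psi$ by a parameter-dependent conjugacy $\psi_{a,b}$ constructed via the Parry formula \eqref{psi formula from Parry} from a suitable Markov refinement. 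The inequality $h_{\rm top}(f_{a,b})\le\log\lambda$ then reduces to the bound $\operatorname{spec}(M_{a,b})\le\lambda$, which in the Markov cases of $\Params$ I would verify by showing the transition matrices $M_{a,b}$ are combinatorial quotients of the matrices $M_A,M_H$ used inside $\Square$; non-Markov parameters would be handled by the continuity established above.

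For the lower bound, my plan is to produce, for each $(a,b)\in\Params$, a topological factor map from $\tilde f_{a,b}$ onto $\tilde f_{-1,0}$ (or a semi-conjugacy onto an invariant subsystem containing a copy of $\tilde f_{-1,0}$), so that $h_{\rm top}(f_{a,b})\ge h_{\rm top}(f_{-1,0})=\log\kappa$. A natural candidate factor arises by merging symbols according to the coarser four-state Markov structure of $M_{-1,0}$, and the fact that the generators $\tilde T,\tilde S,\tilde T^{-1}$ are common to the whole family makes it plausible that such a merging is compatible with the dynamics throughout $\Params$. Once (i) is in hand, (ii) is immediate: $\Params$ is path-connected, $h_{\rm top}$ is continuous, and both extreme values are attained (the maximum on $\Square$ by \Cref{main}, the minimum at $(-1,0)$), so the intermediate value theorem supplies an $(a,b)$ realizing every prescribed $h\in[\log\kappa,\log\lambda]$.

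The principal obstacle is the lower bound. Establishing that $(-1,0)$ is a global minimizer amounts to a two-parameter monotonicity-of-entropy statement, and such statements are notoriously delicate outside the classical unimodal Milnor--Thurston setting -- our maps have two discontinuity points and three monotone branches, so no off-the-shelf kneading monotonicity applies. If a uniform factor map to $\tilde f_{-1,0}$ resists construction, a fallback would be to decompose $\Params$ into regions sharing a common Markov refinement, compare Perron--Frobenius spectra within each region, and patch the inequalities along the boundaries using the continuity established in the first step.
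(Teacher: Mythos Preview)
The statement you are attempting to prove is labeled \emph{Conjecture} in the paper, not a theorem: the authors do not claim a proof, and the surrounding text in \Cref{sec:further} presents only numerical evidence (\Cref{fig:3d,fig:horizontals,fig:outside of square}). So there is no ``paper's own proof'' to compare against; the honest assessment is whether your plan could succeed.

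It cannot, at least not as written, for two structural reasons. First, your reduction of (ii) to (i) assumes continuity of $(a,b)\mapsto h_{\rm top}(f_{a,b})$, but that is precisely the content of \Cref{cm}(i), which the paper also leaves open. Your sketch via ``the kneading determinant whose smallest positive zero depends continuously on $(a,b)$'' hides the real difficulty: for piecewise monotone maps with two discontinuities that move with the parameter, continuity of entropy is not automatic, and indeed there are related families (e.g.\ $\beta$-type and Lorenz-like maps) where entropy has devil's-staircase behavior. You would need a genuine argument here, not an appeal to Milnor--Thurston.

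Second, your upper-bound strategy is self-defeating. You propose to push the conjugacy $\psi$ of \Cref{sec:proof of main} beyond $\Square$, or to build $\psi_{a,b}$ via Parry and then show $\operatorname{spec}(M_{a,b})\le\lambda$ by exhibiting $M_{a,b}$ as a ``combinatorial quotient'' of $M_A$ or $M_H$. But the numerics in \Cref{fig:3d,fig:outside of square} show that $h_{\rm top}(f_{a,b})$ is \emph{strictly} less than $\log\lambda$ throughout $\Params\setminus\Square$; hence $\tilde f_{a,b}$ cannot be conjugate to a slope-$\lambda$ map there, and there is no reason its Markov matrix should be a quotient of $M_A$ (a quotient of an irreducible nonnegative matrix typically has the \emph{same} spectral radius, not a smaller one). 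Any viable upper-bound argument must produce a strict inequality off $\Square$, which your mechanism cannot. The lower-bound idea of a uniform factor onto $\tilde f_{-1,0}$ is likewise speculative: nothing in the paper suggests such a factor exists for all $(a,b)$, and you correctly flag this as the principal obstacle.
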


\begin{conjecture}[Continuity and monotonicity] \label{cm} ~
    \begin{enumerate}[\quad(i)]
        \item The function $(a,b) \mapsto h_{\rm top}(f_{a,b})$ is continuous.
        \item For fixed $b \le \half$, the function $a \mapsto h_{\rm top}(f_{a,b})$ is monotone non-decreasing.
    \end{enumerate}
\end{conjecture}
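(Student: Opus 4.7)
The plan for part (i), continuity of $h_{\rm top}(f_{a,b})$, is to combine two ingredients. First, the function $(a,b) \mapsto h_{\rm top}(\tilde f_{a,b})$ should be lower semicontinuous because for any fixed $n$ the lap-endpoint set of $\tilde f_{a,b}^n$ depends continuously on the two break points $\comp(a),\comp(b)$, and laps cannot vanish under generic perturbation. For upper semicontinuity, I would adapt the Milnor--Thurston kneading framework to the piecewise continuous setting following Alsed\`a--Misiurewicz~\cite{AM}: as $(a,b)$ varies, the kneading sequences of the two discontinuity points vary continuously in the lexicographic topology, and topological entropy is a continuous function of the kneading data.

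For part (ii), monotonicity in $a$ for fixed $b \le \tfrac12$, I would try to exhibit a semi-conjugacy between the symbolic systems. Fix $b \le \tfrac12$ and take $a < a'$ with $(a,b),(a',b) \in \Params$. The two maps $\tilde f_{a,b}$ and $\tilde f_{a',b}$ agree outside the overlap interval $[\comp(a),\comp(a'))$; on this interval $\tilde f_{a,b}$ acts by $\tilde S$ while $\tilde f_{a',b}$ acts by $\tilde T$. To obtain the desired inequality $h_{\rm top}(\tilde f_{a,b}) \le h_{\rm top}(\tilde f_{a',b})$, one would seek a surjective block code from the $(a',b)$-admissible shift onto the $(a,b)$-admissible shift. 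The recoding technique of \Cref{recoding}, built on the identity $\tilde S \tilde T \tilde S = \tilde T^{-1}\tilde S \tilde T^{-1}$, is a natural tool: each time the orbit of $\tilde f_{a',b}$ enters $[\comp(a),\comp(a'))$ and is translated by $\tilde T$, one would collapse a bounded-length block of $\tilde T$-iterations into a single $\tilde S$-step of the $(a,b)$-dynamics.

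I expect the main obstacle to be upper semicontinuity in (i) at parameters where a kneading sequence becomes eventually periodic (the so-called ``tuning'' parameters), and for (ii) the construction of the block code itself, since the two maps have different discontinuity structures and the recoding must respect both simultaneously. The numerical plot \Cref{fig:3d} strongly suggests that outside of the golden square of \Cref{main} there are further plateaus of constant entropy, which further complicates monotonicity: one must prove that the entropy genuinely does not drop as $a$ decreases across these plateau boundaries, where a direct semi-conjugacy argument is unlikely to produce anything sharper than equality.
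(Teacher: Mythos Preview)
The statement you are attempting to prove is \emph{Conjecture~\ref{cm}}, not a theorem: the paper explicitly lists it among the open questions in \Cref{sec:further} and offers no proof, only the numerical evidence of \Cref{fig:3d,fig:horizontals}. There is therefore no argument in the paper to compare your proposal against; you are sketching an attack on an open problem.

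On the substance of your sketch: for part (i), the lower-semicontinuity claim is essentially Misiurewicz's result that $h_{\rm top}$ is lower semicontinuous for piecewise monotone maps, so that direction is reasonable; but your upper-semicontinuity plan via kneading data is where the real content lies, and you have not indicated how to control the lap count of $\tilde f_{a,b}^n$ from above uniformly near a parameter where an orbit of a break point hits the other break point. This is exactly the situation in which new laps can be created discontinuously, and it is not handled by saying kneading sequences vary ``continuously in the lexicographic topology.'' For part (ii), the direction of your factor map is the right one, but the mechanism you propose is not: the identity $\tilde S\tilde T\tilde S=\tilde T^{-1}\tilde S\tilde T^{-1}$ trades three letters for three letters, so it cannot ``collapse a bounded-length block of $\tilde T$-iterations into a single $\tilde S$-step.'' More seriously, the paper itself notes (just after \Cref{horizontals}) that even the simpler plateau statement for the region $-1\le a\le -\tfrac1{b+1}$ resists a uniform recoding argument, so a global semi-conjugacy across all $a$ for fixed $b\le\tfrac12$ is well beyond what the available tools deliver.
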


There are some line segments of the graph in \Cref{fig:3d} that appear to be horizontal, a phenomenon called ``plateau'' in~\cite{Bruinetal}, following~\cite{Botella-Soler-et-al}. In \Cref{fig:horizontals} this is much clearer: each curve has a flat section toward the right. For each value of $b$, the flat section of the curve occurs for $a \in [-1,-\frac1{b+1}]$, so this is described explicitly in \Cref{horizontals}.
\begin{conjecture} \label{horizontals}
	If $b \le \half$ and $-1 \le a \le -\frac1{b+1}$ then $h_{\rm top}(f_{a,b}) = h_{\rm top}(f_{-1,b})$. That is, $h_{\rm top}(f_{a,b})$ is independent of $a$ in the region $\{ (a,b) \in \Params \,|\, b \le \half,\, -1 \le a \le -\frac1{b+1} \}$.
\end{conjecture}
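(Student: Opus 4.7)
The plan mimics the proof of \Cref{main} from Section~\ref{sec:proof of main}. Fix $b \le 1/2$; the goal is to produce a single homeomorphism $\psi_b : [-1,1] \to [-1,1]$ conjugating $\tilde f_{a,b}$ to a map of constant slope $\lambda_b := e^{h_{\rm top}(f_{-1,b})}$ for every $a$ in the horizontal segment $[-1, -1/(b+1)]$. Equality of topological entropies then follows from~\cite{MSz}.

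The natural anchors are the two endpoints $a_1 = -1$ and $a_2 = -1/(b+1)$. Step one is to verify that $\tilde f_{-1, b}$ and $\tilde f_{-1/(b+1), b}$ admit finite Markov partitions (at least for a dense set of $b$, determined by the orbits of the discontinuity points) whose transition matrices share the dominant eigenvalue $\lambda_b$. Parry's construction \eqref{psi formula from Parry} then yields conjugacies $\psi_{-1, b}$ and $\psi_{-1/(b+1), b}$, each to a map of slope $\lambda_b$.

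The core analytic step is to prove that these two conjugacies coincide, by a recoding argument modeled on \Cref{recoding}. The two maps differ only on the interval $[\comp(-1), \comp(-1/(b+1))] = [-1/2, -1/(b+2)]$, where $\tilde f_{-1,b}$ uses $\tilde S$ while $\tilde f_{-1/(b+1), b}$ uses $\tilde T$. I expect a finite family of ``exceptional blocks'' whose cylinder intervals agree across the two symbolic systems via applications of the $\mathrm{PSL}(2,\Z)$ relation $\tilde S \tilde T \tilde S = \tilde T^{-1} \tilde S \tilde T^{-1}$; together with the matching of the appropriate right-eigenvector entries, this gives $\psi_b := \psi_{-1,b} = \psi_{-1/(b+1), b}$. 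With $\psi_b$ in hand, the final constant-slope argument closing the proof of \Cref{main} carries over: for any $a \in [-1, -1/(b+1)]$, the map $\tilde f_{a,b}$ acts as $\tilde T$ on $[-1, \comp(a)] \subset [-1, -1/(b+2)]$, as $\tilde S$ on $[\comp(a), \comp(b)] \subset [-1/2, \comp(b)]$, and as $\tilde T^{-1}$ on $[\comp(b), 1]$. Each of these pieces lies inside an interval on which $\psi_b$ is already known to linearize the respective generator ($\tilde T$ from the $a_2$-anchor, $\tilde S$ from the $a_1$-anchor, $\tilde T^{-1}$ from either). Hence $\psi_b \circ \tilde f_{a,b} \circ \psi_b^{-1}$ has constant slope $\lambda_b$ on all of $[-1,1]$, and $h_{\rm top}(f_{a,b}) = \log \lambda_b = h_{\rm top}(f_{-1,b})$.

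The main obstacle is the recoding and Markov setup. Unlike the Artin/Hurwitz case, where both anchors share an explicit eight-element Markov partition with only a handful of exceptional blocks, here the combinatorics depend on $b$ through the $(-1,b)$-itinerary of the discontinuity point $b$, and partition sizes may grow unboundedly as $b$ varies. Formulating a uniform recoding lemma across $b$ — or, for $b$ with non-periodic discontinuity orbit where finite Markov partitions fail, a continuity/approximation argument (itself tied to the open \Cref{cm}) — is likely to occupy most of the work.
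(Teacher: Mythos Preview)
The statement you are addressing is labeled \emph{Conjecture} in the paper and is explicitly left open: the paper provides no proof. What the paper does say (immediately after stating it) is precisely the method you have outlined --- show $\psi_{-1,b}=\psi_{-1/(b+1),b}$ via a recoding analogous to \Cref{recoding}, then finish with the constant-slope argument from the proof of \Cref{main}. The authors report having carried this out for some individual rational $b$ but state that ``it is not clear how to generalize those recodings to other $b\in[0,\tfrac12]$.''

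So your plan is not a different route; it is exactly the route the paper suggests, and you have correctly identified the obstacle that keeps it a conjecture. Your honest last paragraph is the whole point: a $b$-uniform recoding lemma (or an approximation argument for non-Markov $b$) is not presently available, and without it the proposal remains a program rather than a proof. Nothing you wrote is wrong, but be aware that the ``core analytic step'' you expect to carry over from \Cref{recoding} is precisely the missing ingredient --- the exceptional blocks, the shared Markov partition, and the eigenvector coincidences in Section~\ref{sec:A and H} were all computed by hand for the single pair $(-1,1),(-\tfrac12,\tfrac12)$, and there is no mechanism in the paper producing their analogues for general $b$.
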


One method to prove \Cref{horizontals} would be to prove that $\psi_{-1,b} = \psi_{-1/(b+1), b}$ for each $b \in [0,1]$. For some individual (rational) values of $b$, the authors have found a recoding from $(-1,b)$ to $(-\frac1{b+1},b)$, similar to the recoding from $(-1,1)$ to $(-\half,\half)$ presented in \Cref{sec:A and H}. However, it is not clear how to generalize those recodings to other $b \in [0,\half]$.

\begin{figure}[hbt]
	\definecolor{color1}{rgb}{1,0.25,0.75}
	\definecolor{color2}{rgb}{0,0,1}
	\definecolor{color3}{rgb}{0,0.75,0.75}
	\begin{tabular}{c@{\quad}c}
		\raisebox{0.3em}{\color{color1}\rule{0.67cm}{1.5pt}} $h_{\rm top}(f_{a,2/5})$
		\quad
		\raisebox{0.3em}{\color{color2}\rule{0.67cm}{1.5pt}} $h_{\rm top}(f_{a,1/3})$
		\quad
		\raisebox{0.3em}{\color{color3}\rule{0.67cm}{1.5pt}} $h_{\rm top}(f_{a,1/4})$
		\\
        \includegraphics[width=0.6\textwidth]{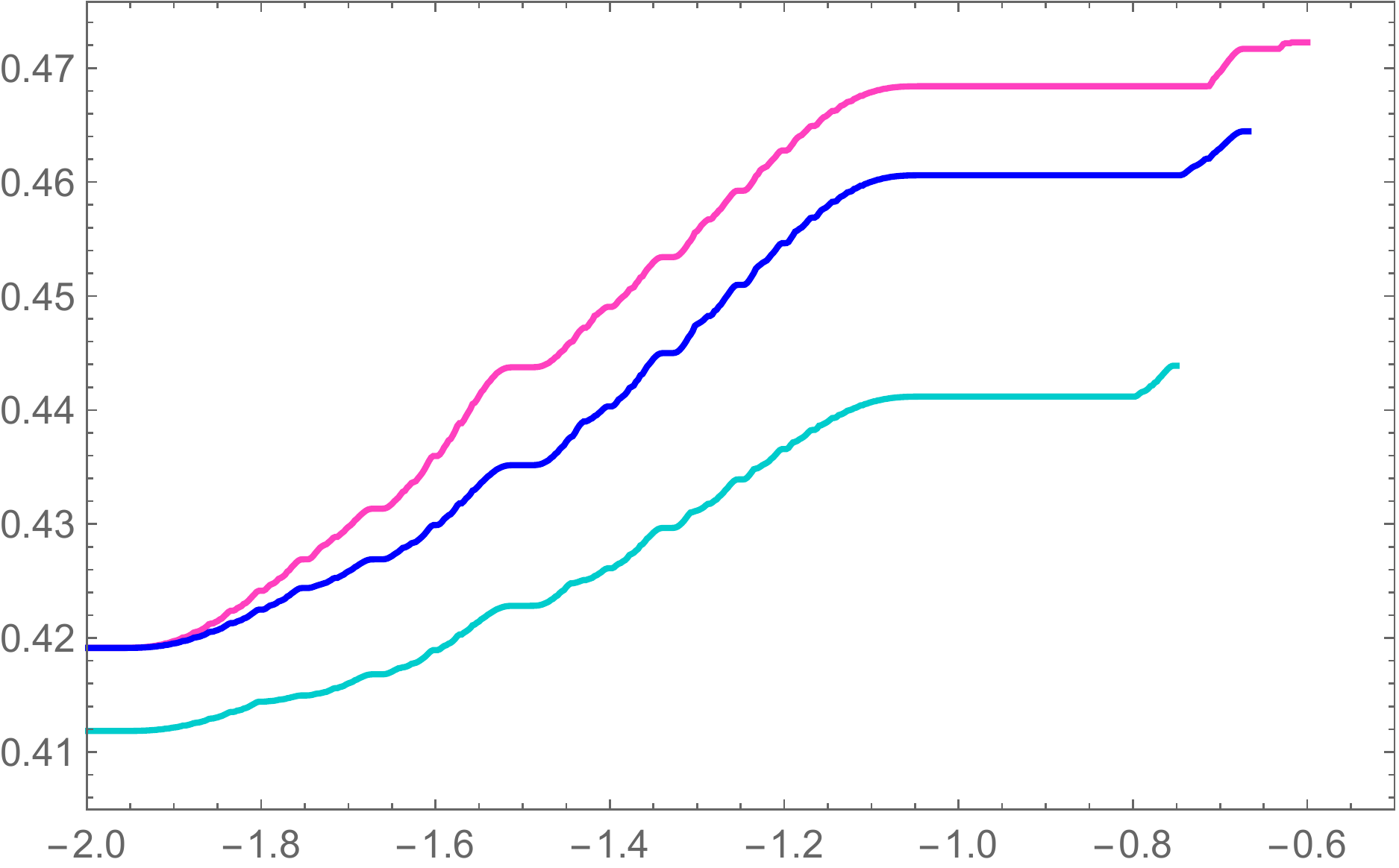}
		&
		\raisebox{1cm}{\begin{tikzpicture}[scale=3]
		    \def\Inf{1.25}
			\drawP[black!50]{black!15}{\Inf}
			\draw [thin] (-\Inf,0) node [left] {\tiny$a$\!} -- (0,0) -- (0,\Inf) node [above] {\tiny$b$};
			\draw [color1, ultra thick] (-\Inf,2/5) -- (-3/5,2/5);
			\draw [color2, ultra thick] (-\Inf,1/3) -- (-2/3,1/3);
			\draw [color3, ultra thick] (-\Inf,1/4) -- (-3/4,1/4);
			
			\draw (-1,0) node [below] {\tiny$-1$} (-1/2,0) node [below] {\tiny$-1/2$} (0,1/2) node [right] {\tiny$1/2$} (0,1) node [right] {\tiny$1$};
			\draw [dashed,domain=0:\Inf,variable=\b,samples=25] plot ({-1/(\b+1)},\b);
			\draw [dashed] (-1,0) -- (-1,1) -- (-1/2,1) -- (-1/2,1/2) -- (-1,1/2);
		\end{tikzpicture}}
		\\[-3pt]
		$a$
	\end{tabular}
	\caption{Plots of entropy for some fixed values of $b$}
	\label{fig:horizontals}
\end{figure}

Although $f_{-1,1}$ (Artin) and $f_{-1,0}$ are well-studied, we do not have any explicit formula for $h_{\rm top}(f_{-1,b})$ in general; numerically calculated values are shown in \Cref{fig:outside of square}. This figure also shows entropy of $f_{b-1,b}$, the one-parameter family along the boundary of $\Params$ that is conceptually similar to the so-called Japanese continued fractions. Note that the two curves in \Cref{fig:outside of square} intersect \textit{only} at $(0,\log(\kappa))$ and $(\half,\log(\frac{1+\sqrt5}2))$; for all $0<b<\half$ we have (numerically) that $h_{\rm top}(f_{b-1,b}) > h_{\rm top}(f_{-1,b})$. Also, the fact that the right-half of the orange curve is flat is implied by \Cref{main}, and the symmetry of the purple curve is because $f_{a,b}$ and $f_{-b,-a}$ are conjugate.
\begin{figure}[hbt]
	\begin{tabular}{c@{\quad}c}
		\raisebox{0.3em}{\color{red!50!blue}\rule{1cm}{1.5pt}} $h_{\rm top}(f_{b-1,b})$
		\qquad
		\raisebox{0.3em}{\color{orange}\rule{1cm}{1.5pt}} $h_{\rm top}(f_{-1,b})$
		\\[2pt]
		\includegraphics[width=0.7\textwidth]{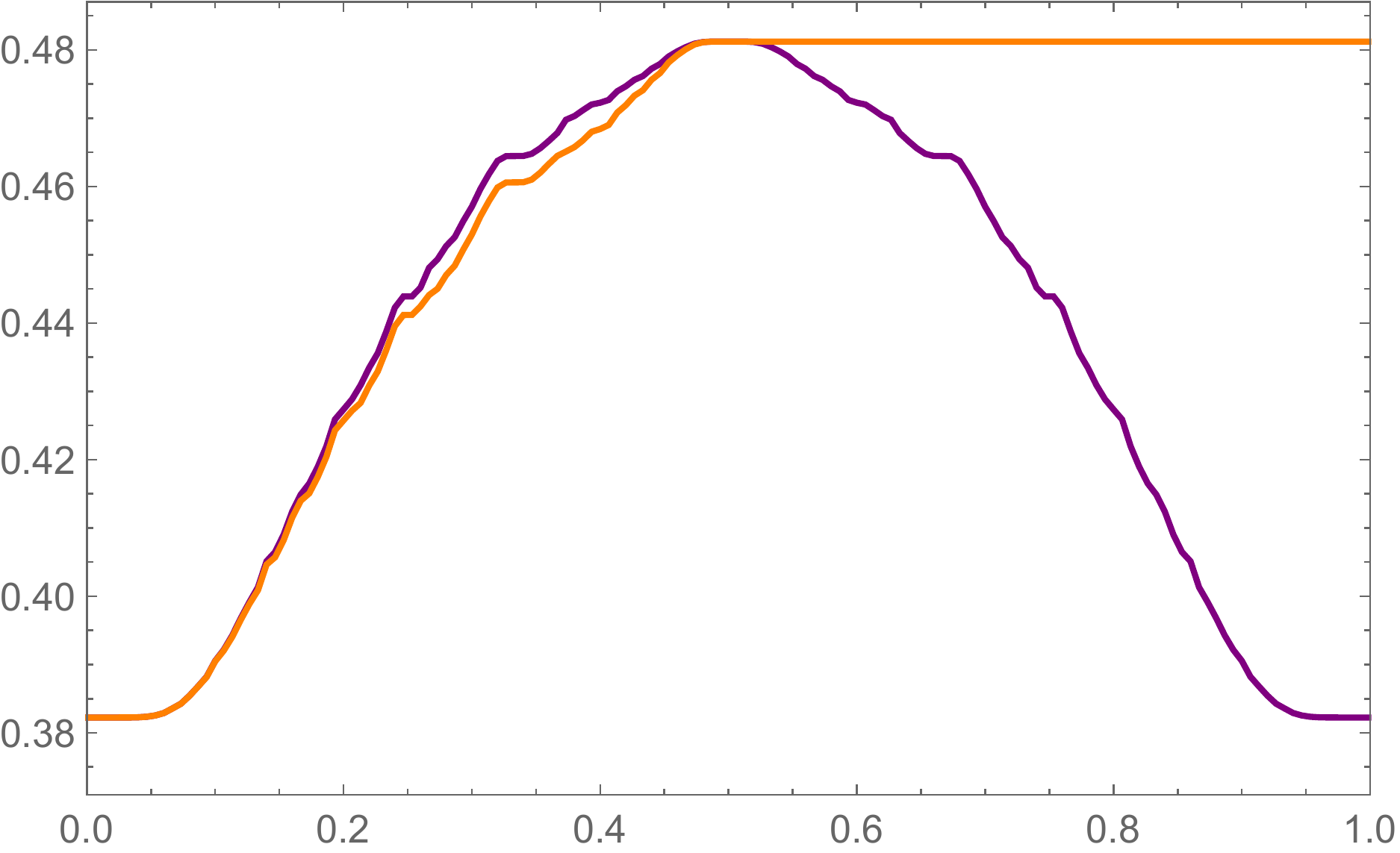}
		&
		\raisebox{1.75cm}{\begin{tikzpicture}[scale=2.5]
			\drawP[black!50]{black!15}{1.25}
			\draw [thin] (-1.25,0) node [left] {\tiny$a$\!} -- (0,0) -- (0,1.25) node [above] {\tiny$b$};
			\draw [red!50!blue, ultra thick] (-1,0) -- (0,1);
			\draw [orange, ultra thick] (-1,0) -- (-1,1);
			
			\draw (-1,0) node [below] {\tiny$-1$} (-1/2,0) node [below] {\tiny$-1/2$} (0,1/2) node [right] {\tiny$1/2$} (0,1) node [right] {\tiny$1$};
			\draw [dashed] (-1,1/2) rectangle (-1/2,1);
		\end{tikzpicture}}
		\\[-3pt]
		$b$
	\end{tabular}
	\caption{Plots of entropy for one-parameter families}
	\label{fig:outside of square}
\end{figure}

\subsection{Cycle property (matching) and entropy locking} \label{sec:matching}
In 2019, Bruin, Carminati, Marmi, and Profeti~\cite{Bruinetal} proved that entropy locking for a one-parameter family of affine maps of an interval with a single point of discontinuity occurs when the two orbits of the discontinuity point {\em match} after the same number of iterations (a property called {\em neutral matching}). Already in 2010 the authors in \cite{KU-JMD} proved that the {\em matching} for $(a,b)$-continued fractions occurs for essentially all $(a,b)\in \Params$ (they called it {\em the cycle property}): specifically,~$a$ has the cycle property if there exist nonnegative integers $m_a$ and $k_a$ such that
\[
f_{a,b}^{m_a}(Sa) =f_{a,b}^{k_a}(Ta),
\]
and, similarly, $b$ has the cycle property if there exist nonnegative integers $m_b$ and $k_b$ such that
\[
f_{a,b}^{m_b}(T^{-1}b) =f_{a,b}^{k_b}(Sb).
\]
In \cite{KU-Dedicata} they proved the cycle property for boundary maps associated to Fuchsian groups, another example of piecewise continuous piecewise monotone maps of the circle. In the $(a,b)$-case, the upper and lower cycles may be of arbitrary length while in the Fuchsian case the cycles are always the same length. In the Fuchsian case  we proved ``entropy rigidity'', that is, entropy locking on the entire parameter space~\cite{AKU-Rigidity}. In the $(a,b)$-case, we proved entropy locking in the golden square~$\Square$. In fact, generic $(a,b) \in \Square$ has neutral matching (i.e., $m_a=k_a$ and $m_b=k_b$), although the lengths can be arbitrary large (this can be proved by carefully following the analysis of~\cite[Sections~4 and~8]{KU-JMD}). 

\smallskip
It \textit{might} be possible to prove that $h_{\rm top}$ is constant in the ``golden square'' directly from the neutral matching property. Such a proof would be in the spirit of Bruin et al.~\cite{Bruinetal}. In the cocompact Fuchsian setting of~\cite{KU-Dedicata,AKU-Rigidity} the cycles are always the same length, so this argument, if possible, would provide alternative proofs of~\cite[Theorem 1]{AKU-Rigidity} and this paper's~\Cref{main} together.

\bibliographystyle{plain}
\bibliography{ab-references}

\end{document}